\colorlet{purpleB70}{blue!70!red}
\colorlet{orangeR65}{red!65!yellow}
\definecolor{red2}{HTML}{d41173}
\definecolor{neongreen}{HTML}{1bf702}
\definecolor{radicalred}{HTML}{FF355E}
\definecolor{denim}{HTML}{1560BD}
\definecolor{darkcyan}{rgb}{0.0, 0.55, 0.55}
\definecolor{cilek}{HTML}{FF43A4}
\definecolor{mor}{HTML}{9F00C5}
\definecolor{phlox}{rgb}{0.87, 0.0, 1.0}
\definecolor{fluorescentpink}{HTML}{FF1493}
\definecolor{napiergreen}{rgb}{0.16, 0.5, 0.0}
\definecolor{kellygreen}{rgb}{0.3, 0.73, 0.09}
\definecolor{parisgreen}{HTML}{ 50C878 }
\definecolor{palatinateblue}{rgb}{0.15, 0.23, 0.89}
\definecolor{ceruleanblue}{rgb}{0.16, 0.32, 0.75}
\definecolor{brandeisblue}{rgb}{0.0, 0.44, 1.0}
\definecolor{KLMblue}{HTML}{0FC0FC}
\definecolor{cinnamon}{rgb}{0.82, 0.41, 0.12}
\definecolor{darkorange}{rgb}{1.0, 0.55, 0.0}
\definecolor{darktangerine}{rgb}{1.0, 0.66, 0.07}
\definecolor{deepcarrotorange}{rgb}{0.91, 0.41, 0.17}
\definecolor{internationalorange}{HTML}{FF4F00}
\definecolor{persimmon}{HTML}{EC5800}
\definecolor{pumpkin}{HTML}{FF7518}
\definecolor{darkred}{rgb}{1,0,0} 
\definecolor{darkgreen}{rgb}{0,0.7,0}
\definecolor{darkblue}{rgb}{0,0,1}
\def\reflb#1#2{\begingroup
    #2%
    \def\@currentlabel{#2}%
    \phantomsection\label{#1}\endgroup
}
\newcommand{\labell}[1] {\label{#1}}
\numberwithin{equation}{section}
\newtheorem{Theorem}{Theorem}
\numberwithin{Theorem}{section}
\newtheorem   {Lemma}[Theorem]{Lemma}
\newtheorem   {Proposition}[Theorem]{Proposition}
\newtheorem   {Corollary}[Theorem]{Corollary}
\theoremstyle {definition}
\newtheorem   {Definition}[Theorem]{Definition}
\theoremstyle {remark}
\newtheorem   {Remark}[Theorem]{Remark}
\def    \eps    {\epsilon}
\newcommand{\CA}{{\mathcal A}}
\newcommand{\CC}{{\mathcal C}}
\newcommand{\CS}{{\mathcal S}}
\newcommand{\Ham}{{\mathit{Ham}}}
\newcommand{\id}{{\mathit id}}
\newcommand{\tL}{\tilde{L}}
\newcommand{\CB}{{\mathcal B}}
\def    \F      {{\mathbb F}}
\def    \R      {{\mathbb R}}
\def    \Z      {{\mathbb Z}}
\def    \N      {{\mathbb N}}
\def    \T      {{\mathbb T}}
\def    \CP     {{\mathbb C}{\mathbb P}}
\def    \12     {{\frac{1}{2}}}
\def    \p      {\partial}
\def    \HF     {\operatorname{HF}}
\def    \H      {\operatorname{H}}
\def    \CF      {\operatorname{CF}}
\def    \vol     {\operatorname{vol}}
\def    \s     {\operatorname{c}}
\def    \hn    {\scriptscriptstyle{H}}
\def    \Fl    {\scriptscriptstyle{Fl}}
\def    \inv   {\mathrm{inv}}
\newcommand \diam   {\operatorname{diam}}
\newcommand    \htop  {\operatorname{h_{\scriptscriptstyle{top}}}}
\newcommand   \hbr {\operatorname{\hbar}}
\newcommand   \hhbr {\operatorname{\hat{\hbar}}}
\newcommand   \hhbar {\operatorname{\hat{\hbar}}}
\newcommand   \Cr   {\scriptscriptstyle{CR}}
\begin{document}


\setlength{\smallskipamount}{6pt}
\setlength{\medskipamount}{10pt}
\setlength{\bigskipamount}{16pt}





\title [On the Growth of the Floer Barcode]{On the Growth of the Floer
  Barcode}

\author[Erman \c C\. inel\. i]{Erman \c C\. inel\. i}
\author[Viktor Ginzburg]{Viktor L. Ginzburg}
\author[Ba\c sak G\"urel]{Ba\c sak Z. G\"urel}

\address{E\c C: Institut de Math\'ematiques de Jussieu - Paris Rive
  Gauche (IMJ-PRG), 4 place Jussieu, Boite Courrier 247, 75252 Paris
  Cedex 5, France} \email{erman.cineli@imj-prg.fr}

\address{VG: Department of Mathematics, UC Santa Cruz, Santa
  Cruz, CA 95064, USA} \email{ginzburg@ucsc.edu}

\address{BG: Department of Mathematics, University of Central Florida,
  Orlando, FL 32816, USA} \email{basak.gurel@ucf.edu}

\subjclass[2020]{53D40, 37J11, 37J46} 

\keywords{Topological entropy, Periodic orbits, Hamiltonian
  diffeomorphisms, Floer homology, Persistence homology and barcodes}

\date{\today} 

\thanks{The work is partially supported by NSF CAREER award
  DMS-1454342 (BG), Simons Foundation Collaboration Grants 581382 (VG)
  and 855299 (BG) and ERC Starting Grant 851701 via a postdoctoral
  fellowship (E\c{C})}



\begin{abstract}
  This paper is a follow up to the authors' recent work on barcode
  entropy. We study the growth of the barcode of the Floer complex for
  the iterates of a compactly supported Hamiltonian diffeomorphism. In
  particular, we introduce sequential barcode entropy which has
  properties similar to barcode entropy, bounds it from above and is
  more sensitive to the barcode growth. In the same vein, we explore
  another variant of barcode entropy based on the total persistence
  growth and revisit the relation between the growth of periodic
  orbits and topological entropy. We also study the behavior of the
  spectral norm, aka the $\gamma$-norm, under iterations. We show that the
  $\gamma$-norm of the iterates is separated from zero when the map
  has sufficiently many hyperbolic periodic points and, as a
  consequence, it is separated from zero $C^\infty$-generically in
  dimension two. We also touch upon properties of the barcode entropy
  of pseudo-rotations and, more generally, $\gamma$-almost periodic
  maps.
\end{abstract}

\maketitle

\tableofcontents

\section{Introduction}
\label{sec:intro}
The main theme of the paper is the growth of the Floer complex for the
iterates of a compactly supported Hamiltonian diffeomorphism. In
particular, we focus on the exponential growth rate of the barcode of
the Floer complex and the behavior of the spectral norm.

There is a wide range of interpretations of the question of the growth
of the Floer complex for the iterates $\varphi^k$ and there seems to
be no obvious way to make the question precise readily fitting all or
most of the aspects of the problem. When trying to articulate this
question, it is useful to keep two facts in mind.

First of all, in most cases the ``effective'' diameter of the action
spectrum $\CS\big(\varphi^k\big)$ grows at most polynomially with the
order of iteration. For instance, as a simple consequence of the
isoperimetric inequality, the diameter
\[
\diam \CS\big(\varphi^k\big)
:=\max\CS\big(\varphi^k\big)-\min\CS\big(\varphi^k\big)
\]
of this set for contractible orbits grows at most linearly in $k$ when
the underlying symplectic manifold $M$ is a surface of genus $g\geq 2$
and it grows at most quadratically when $M=\T^{2n}$; cf.\
\cite{Po}. In fact, we are not aware of any example where the diameter
would be shown to grow faster than linearly. When $M$ is not
aspherical or non-contractible periodic orbits are included, the
notion of the diameter is more involved and less unambiguous, but even
then the effective diameter grows polynomially in most cases; see
\cite[Rmk.\ 3.4]{CGG:Entropy}. Similarly, the index spectrum defined
as in, e.g., \cite{GG:gaps} can be shown to grow at most polynomially
in many situations.

On the other hand, the number of $k$-periodic points of $\varphi$ can
grow arbitrarily fast, e.g., superexponentially. Moreover, in
dimension two this behavior is in some sense common, i.e., occurs for
a dense subset of an open set of Hamiltonian diffeomorphisms in the
analytic topology; see \cite{As} and references therein.

With this in mind, one useful way to measure the growth of the Floer
complex of $\varphi^k$ is by counting the number of bars
$b_\eps\big(\varphi^k\big)$ of length greater than $\eps>0$ in its
barcode. The limit $\hbar(\varphi)$ as $\eps\searrow 0$ of the
exponential growth rate of $b_\eps\big(\varphi^k\big)$ is called the
\emph{barcode entropy} and closely related to the topological entropy
$\htop(\varphi)$ of $\varphi$; \cite{CGG:Entropy}. In particular,
$\hbar(\varphi)\leq \htop(\varphi)$ and hence
$b_\eps\big(\varphi^k\big)$ grows at most exponentially in $k$, and
$\hbar(\varphi)=\htop(\varphi)$ in dimension two.

Replacing the fixed treshold $\eps$ by $\eps_k>0$ which depends on $k$
gives rise to a somewhat different way to measure the size of the
Floer complex, which we explore in this paper. The exponential growth
rate of $b_{\eps_k}\big(\varphi^k\big)$, where $\eps_k\searrow 0$
subexponentially, is encoded by the \emph{sequential barcode entropy}
$\hhbar(\varphi)\geq \hbar(\varphi)$ of $\varphi$. We show that
sequential barcode entropy has properties similar to barcode
entropy. For instance, we still have that
$\hhbar(\varphi)\leq \htop(\varphi)$, and hence
$b_{\eps_k}\big(\varphi^k\big)$ grows at most exponentially when
$\eps_k$ is subexponential, and again
$\hhbar(\varphi)=\hbar(\varphi)=\htop(\varphi)$ in dimension
two. (Hypothetically it is possible that
$\hhbar(\varphi)=\hbar(\varphi)$  in all dimensions. However,
neither $\hhbar(\varphi)$ nor $\hbar(\varphi)$ is equal to
$\htop(\varphi)$ in general when $\dim M\geq 6$; see \cite{Ci}.)

A different perspective on the barcode growth and a variant of barcode
entropy is explored in Section \ref{sec:applications}. There we
introduce and prove some basic properties of a version of entropy
based on the growth of the total persistence of the barcode of
$\varphi^k$. In that section, we also revisit the problem of relating
the growth of periodic orbits and topological entropy. Namely, while
in general beyond dimension two these two invariants are known not to
be connected (see, e.g., \cite{As,Kal}) even for Hamiltonian
diffeomorphisms, we show that the exponential growth rate of periodic
orbits does give a lower bound for the topological entropy, albeit
with a correction term coming from the decay of the shortest bar.

The spectral norm, aka the $\gamma$-norm,
$\gamma(\varphi)\leq \diam \CS(\varphi)$ is roughly speaking the
difference between the homological maximum and the homological minimum
of the action functional (see, e.g., \cite{EP, Oh:gamma, Oh:constr,
  Sc, Vi}), giving another and more robust way to measure the
``diameter'' of $\CS(\varphi)$. (We will recall the definition of the
$\gamma$-norm in Section \ref{sec:gamma}.) Upper bounds on the
$\gamma$-norm have been extensively studied --- see the references
above and also, e.g., \cite{KS,Sh:V,Sh:V2}. Here we are interested in
lower bounds on the sequence $\gamma\big(\varphi^k\big)$ and more
specifically in the question if this sequence can get arbitrarily
close to zero. The connection with the barcode growth comes from the
fact that $\gamma(\varphi)$ gives an upper bound on the boundary
depth, i.e., the length of the largest finite bar,
$\beta_{\max}(\varphi)$; see \cite{Us1,Us2} and \cite{KS}. From this
perspective, very little seems to be known about the behavior of the
sequence $\gamma\big(\varphi^k\big)$ outside the case of
pseudo-rotations; see \cite{GG:PR,JS}.  Here, refining \cite[Prop.\
6.5]{CGG:Entropy}, we show that the sequence
$\gamma\big(\varphi^k\big)$ is bounded away from zero when $\varphi$
has sufficiently many hyperbolic periodic points. This is the case,
for instance, when $\htop(\varphi)>0$ and $\dim M=2$; or when
$\varphi$ is a strongly non-degenerate Hamiltonian diffeomorphism of a
positive genus surface. In particular, in dimension two, this sequence
is bounded away from zero $C^\infty$-generically; cf.\ \cite{LCS}.

Finally, we also look at another extreme and examine
\emph{$\gamma$-approximate identities}, the maps whose iterates
approximate the identity arbitrarily well with respect to the
$\gamma$-norm, and \emph{$\gamma$-almost periodic maps}, i.e., the
maps such that $\gamma\big(\varphi^k\big)$ becomes arbitrarily small
with positive frequency; see \cite{GG:AI}. The main, and to date the
only, source of such maps are Hamiltonian pseudo-rotations; cf.\
\cite{GG:PR,JS}. In particular, we show that for a $\gamma$-almost
periodic map $\varphi$ the sequence $b_\eps\big(\varphi^k\big)$ is
bounded for all $\eps >0$ and hence $\hbar(\varphi)=0$. This is a step
toward the proof of the conjecture that $\htop(\varphi)=0$ for
Hamiltonian pseudo-rotations.

This paper is formally independent from \cite{CGG:Entropy}, but
conceptually it is a follow up to that work and perhaps should be
better read with that work in mind.

\medskip\noindent{\bf Acknowledgements.} We are grateful to David
Burguet for useful
discussions.

\section{Definitions and results}
\label{sec:results} 
\subsection{Floer homology and barcode entropy}
\label{sec:def}

Throughout the paper we use conventions and notation from
\cite{CGG:Entropy}. Referring the reader to \cite[Sect.\
3]{CGG:Entropy} and references therein for a much more detailed
discussion, here we only touch upon several key points.

\subsubsection{Floer homology and barcodes}
In this paper all Lagrangian submanifolds $L \subset M$ are assumed to
be closed and monotone with minimal Chern number at least 2.
Hamiltonian diffeomorphisms $\varphi$ are always required to have
compact support, and when $M$ is not compact we assume that it is
sufficiently well-behaved at infinity (e.g., convex, or wide in the
sense of \cite[Defn.\ 3.1]{Gu}) so that the filtered Floer complex and
homology for the pair $(L,\varphi(L))$ or the map $\varphi$ itself can
be defined; cf.\ \cite[Rmk.\ 2.8]{CGG:Entropy}.

For the sake of simplicity Floer complexes and homology and also the
ordinary homology are taken over the ground field $\F=\F_2$. When $L$
and $L'$ are Hamiltonian isotopic and intersect transversely, we
denote by $\CF(L,L')$ the Floer complex of the pair $(L,L')$. This
complex is generated by the intersections $L\cap L'$ over the
\emph{universal Novikov field} $\Lambda$. This is the field of formal
sums
$$
\lambda=\sum_{j\geq 0} f_j T^{a_j},
$$
where $f_j\in \F$ and $a_j\in\R$ and the sequence $a_j$ (with
$f_j\neq 0$) is either finite or $a_j\to\infty$.

Due to our choice of the Novikov field, the complex $\CF(L,L')$ is not
graded. However, fixing a Hamiltonian isotopy from $L$ to $L'$ and
``cappings'' of intersections, we obtain a filtration on $\CF(L,L')$ by
the Hamiltonian action. The differential on the complex is defined in
the standard way. Note that the complex breaks down into a direct sum
of subcomplexes over homotopy classes of paths from $L$ to $L'$. Then,
to define the action filtration on $\CF(L,L')$, we also need to pick a
reference path in every homotopy class.

The barcode $\CB(L, L')$ of the Floer complex $\CF(L, L')$, in the
most refined form, is a collection of finite or semi-infinite
intervals defined in general up to some shift ambiguity. For our
purposes, it is convenient to forgo the location of the intervals and
treat $\CB(L, L')$ as a collection (i.e., a multiset) of positive
numbers including $\infty$. A construction of barcodes suitable for
our purposes is introduced and worked out in detail in \cite{UZ} and
also discussed in \cite{CGG:Entropy}. Below we briefly go over it.

Set $\CC=\CF(L, L')$ and fix a filtration
$\CA \colon \CC \to \R \cup \{-\infty\}$ via Hamiltonian action where
$\CA(0)$ is set to $-\infty$. A finite set of vectors $\xi_i\in \CC$
is called \emph{orthogonal} if for any collection
$\lambda_i\in\Lambda$ we have
$\CA (\sum\lambda_i\xi_i )=\max \CA(\lambda_i\xi_i)$. A
$\Lambda$-basis $\{\alpha_i, \,\eta_j, \,\gamma_j\}$ of $\CC$ is
called a \emph{singular decomposition} if it is orthogonal and
$\p_{\Fl}\alpha_i=0$, $\p_{\Fl}\gamma_j=\eta_j$. It is shown in
\cite[Sects. 2 and 3]{UZ} that $\CC$ admits a singular
decomposition. Ordering the pairs $(\eta_j,\,\gamma_j)$ by the action
difference, we obtain
$$
\CA(\gamma_1)-\CA(\eta_1)\leq\CA(\gamma_2)-\CA(\eta_2)\leq\ldots.
$$
In this paper we refer to the multiset formed by the differences
$\CA(\gamma_i)-\CA(\eta_i)$ together with $\dim_\Lambda\HF(L,L')$ many
$\infty$'s (corresponding to the basis elements $\alpha_i$) as the
\emph{barcode} of $\CC=\CF(L, L')$ and denote it by
$\CB(L,L')$. Moreover, abusing notation, we call these numbers
\emph{finite/infinite bars}. In the original definition \cite[Def.\
6.3]{UZ}, barcode also contains information about the location of
these bars, i.e., the bars are pinned, whereas our version
only keeps the length of the bars. For our purposes the length data
suffices; hence, for simplicity, we forgo the locations. The barcode
$\CB(L,L')$ is independent of the choice of a singular decomposition
and other auxiliary data involved in the construction of $\CF(L, L')$;
see \cite[Thm.\ 7.1]{UZ} and \cite[Prop.\ 6.2]{Us2}. Also note that
$\CB(L,L')=\CB(L',L)$ as was shown in \cite[Prop.\ 2.20]{UZ}; see also
\cite[Sec.\ 3.3.1]{CGG:Entropy}.

Recall that the \emph{Hofer norm} of a Hamiltonian diffeomorphism
$\varphi \colon M \to M$ is defined as
$$
\|\varphi\|_{\hn} = \inf_{H}\int_{S^1}\big(\max_M H_t-\min_M
H_t\big)\, dt,
$$
where the infimum is taken over all 1-periodic in time Hamiltonians
$H$ generating $\varphi$, i.e., $\varphi=\varphi_H$. We refer the
reader to, e.g., \cite{Po:Book} and references therein for a very
detailed discussion of the Hofer norm. The \emph{Hofer distance}
between two Hamiltonian isotopic Lagrangian submanifolds $L$ and $L'$
is
\[
d_{\hn}(L,L')=\inf\big\{\|\varphi\|_{\hn}\mid \varphi(L)=L'\big\};
\]
see \cite{Ch} and, e.g., \cite{Us} for further references.

The most important feature of the barcode, at least for our purposes,
is that it is continuous in the Lagrangian with respect to the
$C^\infty$-norm and even the Hofer norm (or the $\gamma$-norm; see
Section \ref{sec:gamma}). In order to state the continuity property,
let us first recall the relevant metric on the space of barcodes. We
say that the barcodes $\CB_1$, $\CB_2$ are \emph{$\delta$-matched} if,
after deleting as needed bars of length $<2\delta$, one can find a
bijection $\CB_1 \to \CB_2$; $\beta_1^i \mapsto \beta_2^i$ such that
$\vert \beta_1^i - \beta_2^i \vert < 2\delta$. The infimum of all such
$\delta$'s is called the \emph{bottleneck distance} between
$\CB_i$. This is indeed a distance on the space of un-pinned barcodes,
which is bounded from above by the bottleneck distance on the space of
pinned barcodes under the natural forgetful map between these spaces.
The continuity property can be stated as follows; see
\cite[Sect.\ 12]{UZ} for details. Assume that Lagrangian submanifolds
$L$, $L'$ and $L''$ are Hamiltonian isotopic such that
$L \pitchfork L'$ and $L \pitchfork L''$. Then the bottleneck distance
between $\CB(L, L')$ and $\CB(L, L'')$ is bounded from above by the
Hofer distance $d_{\hn}(L',L'')$. This property allows one to extend
the definition of the barcode ``by continuity'' to the case where the
manifolds are not transverse.

\subsubsection{Barcode entropy} In this section we review the
definition of barcode entropy introduced in \cite{CGG:Entropy}. Let
$L$, $L'$ be two transverse Lagrangians as in the previous section and
let $\CB(L, L')$ be the barcode of the Floer complex $\CF(L, L')$. Set
\[
  b_\eps(L,L'):= \big|\{\beta\in \CB(L, L') \mid \beta>\eps\}\big|,
  \]
  and denote the total number of bars in the barcode by 
\[
  b(L,L'):=|\CB(L,L')|\geq b_\eps(L,L').
\]
Omitting the definition of the barcode in the non-transverse case, we
extend the barcode counting function $b_\eps(L,L')$ to the situation
where $L$ and $L'$ need not be transverse by setting
\begin{equation}
  \label{eq:b-eps3}
b_\eps(L,L'):=\liminf_{\tL\to L'}b_\eps(L,\tL)\in \Z.
\end{equation}
Here the limit is taken over all Lagrangian submanifolds $\tL\pitchfork L$
which are Hamiltonian isotopic to $L'$ and converge to $L'$ in the
$C^\infty$-topology (or at least in the $C^1$-topology).  As a
consequence, $d_{\hn}(\tL,L')\to 0$, where $d_{\hn}$ is the Hofer
distance. Alternatively, we could have required $\tL$ be Hamiltonian
isotopic to $L$, transverse to $L'$ and converge to $L$.  Since
$b_\eps(L,L')\in\Z$, the limit in \eqref{eq:b-eps3} is necessarily
attained, i.e., there exists $\tL$ arbitrarily close to $L'$ such that
$b_\eps(L,L')=b_\eps(L,\tL)$. Observe that definition
\eqref{eq:b-eps3} extends to the transverse case. Namely, one
direction is a consequence of the ``$C^\infty$-stability'' of
essentially all the data related to $\CF(L, L')$; alternatively,
though unnecessary, one can use the continuity of barcodes. As for the
other direction, one can, for instance, take the constant sequence.

\begin{Remark}
  In this paper we use the barcode counting function as a stable lower
  bound for the number of intersections. Namely, first of all, note
  that
$$
|L\cap L'|=\dim_\Lambda\CF(L,L')=2b(L,L')-\dim_\Lambda\HF(L,L')\geq
b(L,L')\geq b_\eps(L,L')
$$
whenever $L \pitchfork L'$. This directly follows from the definition
of the barcode. In particular, in the transverse case, $b_\eps(L,L')$
gives a lower bound for the number of intersections:
\begin{equation}
\label{eq:intersections-b}
|L\cap L'| \geq b(L,L')\geq b_\eps(L,L').
\end{equation}
Assume now that Lagrangians $L$, $L'$ and $L''$ are Hamiltonian
isotopic, $L''\pitchfork L$ and $d_{\hn}(L',L'')<\delta/2$. Then,
whether or not $L$ and $L'$ are transverse, we have
 \begin{equation}
   \label{eq:intersections}
  |L\cap L''|\geq b_{\eps}(L,L'')\geq b_{\eps+\delta}(L,L').
\end{equation}
Here the first inequality is just \eqref{eq:intersections-b} and the
second inequality, which holds regardless of $L\pitchfork L''$ or not,
is the extension of the continuity property from the previous section
via the limit \eqref{eq:b-eps3} to the non-transverse case.
\end{Remark}

\begin{Definition}[Relative Barcode Entropy]
  \label{def:hbr-rel12}
  The \emph{barcode entropy of $\varphi$ relative to $(L,L')$} is
  $$
  \hbr(\varphi;L,L'):=\lim_{\eps\searrow 0} \hbr_\eps (\varphi;
  L,L')\in [0, \infty],
  $$  
  where
  $$
  \hbr_\eps(\varphi; L,L'):=\limsup_{k\to \infty}\frac{\log^+
    b_\eps\big(L,L^k\big)}{k}\textrm{ and }  L^k:=\varphi^k(L').
  $$
 \end{Definition}
 Here and throughout the paper the logarithm is taken base 2 and
 $\log^+=\log$ except that $\log^+0=0$. Note that
 $\hbr_\eps(\varphi; L,L')$ is increasing as $\eps\searrow 0$, and
 hence the limit exists, although \emph{a priori} it could be
 infinite.

 Next, we discuss the absolute barcode entropy. Let $M$ be a closed
 monotone symplectic manifold and again let $\varphi\colon M\to M$ be
 a Hamiltonian diffeomorphism. Then we can apply the above
 constructions to $L=\Delta=L'$, the diagonal in the symplectic square
 $\big(M\times M, (-\omega,\omega)\big)$, with $\varphi$ replaced by
 $\id\times \varphi$, or directly to the Floer complex $\CF(\varphi)$
 of $\varphi$ \emph{for all free homotopy classes of loops in $M$}. In
 the latter case we denote by $\CB(\varphi)$ the resulting
 barcode. For instance, we have
\begin{align*}
  b_\eps\big(\varphi^k\big)
  &:=b_\eps\big(L,L^k\big)\\
  &=\big|\{\textrm{bars of length greater than
    $\eps$ in the barcode $\CB\big(\varphi^k\big)$}\}\big|,
\end{align*}
where $L=\Delta$ and $L^k$ is the graph of $\varphi^k$, and in the
second equality we tacitly assumed that $\varphi^k$ is
non-degenerate. We emphasize that we include the 1-periodic orbits in
all free homotopy classes of loops in $M$ as generators of
$\CF(\varphi)$ in contrast with a more common definition involving
only contractible 1-periodic orbits. This is absolutely essential for
the definition of barcode entropy.

\begin{Definition}[Absolute Barcode Entropy]
  \label{def:hbr-bar}
  The \emph{$\eps$-barcode entropy} of $\varphi$ is
  $$
  \hbr_\eps(\varphi):=\limsup_{k\to \infty}\frac{\log^+
    b_\eps\big(\varphi^k\big)}{k}
  $$
  and the \emph{(absolute) barcode entropy} of $\varphi$ is
  $$
  \hbr(\varphi):=\lim_{\eps\searrow 0} \hbr_\eps(\varphi) \in
  [0,\,\infty]
  $$
  or, in other words,
  $$
  \hbr(\varphi):=\hbr(\id\times\varphi;\Delta, \Delta).
  $$
\end{Definition}
Again, the limit in the definition of $\hbr(\varphi)$ exists since
$\hbr_\eps(\varphi)$ is increasing as $\eps\searrow 0$.

By \cite[Thm.\ 5.1]{CGG:Entropy},
 $\hbar(\varphi; L,L')\leq \htop(\varphi)<\infty$, and hence
 $b_\eps(L,L^k)$ grows at most exponentially:
 \begin{equation}
   \label{eq:growth-L}
    b_\eps(L,L^k)\leq 2^{c k} \textrm{ for large } k,
  \end{equation}
  where we can take any $c>\htop(\varphi)$. In particular,
  $\hbar(\varphi)\leq \htop(\varphi)<\infty$ and again
    \begin{equation}
      \label{eq:growth-phi}
      b_\eps\big(\varphi^k\big)\leq 2^{c k}
    \end{equation}
    for large $k$. At the same time, the number of periodic points of
    $\varphi$, and hence $b\big(\varphi^k\big)$, can grow arbitrarily
    fast and, as a consequence, the shortest bar can also go to zero
    arbitrarily fast; see \cite{As} and Section
    \ref{sec:applications}. One of our goals in this paper is to
    refine \eqref{eq:growth-L}, \eqref{eq:growth-phi} and \cite[Thm.\
    5.1]{CGG:Entropy}.

    \begin{Remark}
      Although Definition \ref{def:hbr-bar} closely resembles the
      definition of topological entropy, the similarity is rather
      deceiving. For instance, the $\eps$-entropy family
      $\hbar_\eps(\varphi)$ is well-defined for every $\eps>0$, while
      its counterpart for topological entropy depends on the
      background metric. Note that as a consequence when
      $\hbar(\varphi)>0$ we obtain a new numerical invariant of
      $\varphi$: the treshold value of $\eps$ for which the entropy is
      positive, $\sup\{\eps>0\mid \hbar_\eps(\varphi)>0\}$.
    \end{Remark}

\subsection{Barcode growth and sequential entropy}
\label{sec:growth}
In this section we consider the growth of $b_{\eps_k}\big(L,L^k\big)$
for a certain class of sequences $\eps_k>0$. To be more precise, a
bounded sequence $\eps_k>0$ is said to be \emph{subexponential} if
$$
\eps_k 2^{\eta k}\to\infty \textrm{ for all } \eta>0
$$
or, equivalently,
\begin{equation}
  \label{eq:subexp2}
\lim_{k\to\infty}\frac{\log^+ \eps_k }{k}=0.
\end{equation}
For instance, a constant sequence or a polynomially decaying sequence
is subexponential. For a sequence $\eps_k>0$, define the
\emph{relative sequential $\{\eps_k\}$-barcode entropy} to be
  $$
  \hhbr_{\{\eps_k\}}(\varphi; L,L'):=\limsup_{k\to \infty}\frac{\log^+
    b_{\eps_k}\big(L,L^k\big)}{k}\in [0, \infty]\textrm{ with }
  L^k:=\varphi^k(L').
  $$
  Furthermore, let us partially order positive sequences by
  $\{\eps'_k\}\preceq \{\eps_k\}$ whenever $\eps'_k\leq \eps_k$ for
  all large $k\in\N$. Clearly,
  \begin{equation}
    \label{eq:order}
  \hhbr_{\{\eps'_k\}}(\varphi; L,L')\geq \hhbr_{\{\eps_k\}}(\varphi;
  L,L')
  \textrm{ when } \{\eps'_k\}\preceq \{\eps_k\}.
\end{equation}

Then, in the setting of Section \ref{sec:def}, we define the
\emph{relative sequential barcode entropy} as
  $$
  \hhbr(\varphi;L,L'):=\sup_{\{\eps_k\}}\hhbr_{\{\eps_k\}} (\varphi;
  L,L')\in [0, \infty],
  $$
  where the supremum is taken over all subexponential sequences
  $\{\eps_k\}$. (In this definition one can replace the supremum by
  the direct limit with respect to the reversed partial ordering,
  i.e., as the sequences get closer and closer to zero.) The absolute
  sequential entropy $\hhbr(\varphi)$ and its $\{\eps_k\}$-counterpart
  $\hhbr_{\{\eps_k\}}(\varphi)$ are defined in a similar fashion. By
  \eqref{eq:order}, for every subexponential sequence $\eps_k\to 0$,
  we have
  \begin{equation}
    \label{eq:entropies}
    \hbr(\varphi;L,L')\leq
    \hhbr_{\{\eps_k\}}(\varphi;L,L')\leq\hhbr(\varphi;L,L')\textrm{ and }
    \hbr(\varphi)\leq \hhbr_{\{\eps_k\}}(\varphi)\leq\hhbr(\varphi).
  \end{equation}
  (We are not aware of any examples where the inequality between
  $\hhbar$ and $\hbar$ is strict and hypothetically it is possible
  that the sequential barcode entropy is always equal to the barcode
  entropy.)  Our key result is the following refinement of
  \eqref{eq:growth-L}, \eqref{eq:growth-phi} and \cite[Thm.\
  5.1]{CGG:Entropy}.

\begin{Theorem}
  \label{thm:growth}
  Let $\{\eps_k\}$ be a subexponential sequence. Then
  $b_{\eps_k}\big(L,L^k\big)$ grows at most exponentially.
  Furthermore,
$$
\hhbr(\varphi;L,L')\leq\htop (\varphi).
$$
\end{Theorem}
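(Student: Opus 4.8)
The plan is to re-run the proof of \cite[Thm.\ 5.1]{CGG:Entropy}, where the fixed-threshold statement is established, with the threshold now allowed to shrink with the iteration and with the constants tracked carefully; the two inputs beyond the fixed-$\eps$ argument will be an isoperimetric reading of the crossing-energy estimate and the vanishing of the entropy of $\varphi$ at small scales, where smoothness enters. First we reduce to the transverse case: a bar of length $>\eps$ in $\CB(L,L^k)$ is stable, persisting (shortened by at most $\delta$) under a Hamiltonian perturbation of Hofer size $<\delta/2$, and $C^1$-small perturbations are Hofer-small, so by \eqref{eq:intersections} we may replace $L^k$ by a transverse $\wt L^k$ arbitrarily $C^1$-close to it and count bars of length $>\eps_k/2$; this changes neither the ``at most exponential'' assertion nor the value of $\hhbr$, so throughout we take $L\pitchfork L^k$.

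Next comes the crossing-energy reduction. Fixing a finite cover of the relevant compact region of $M$ by Darboux balls, the crossing-energy theorem behind \cite[Thm.\ 5.1]{CGG:Entropy} attaches to each $\eps>0$ a scale $\delta(\eps)>0$, \emph{independent of $k$}, such that a Floer strip for $(L,L^k)$ whose shadow in $M$ fails to stay $\delta(\eps)$-close to a single $\varphi$-orbit segment has energy $>\eps$; since the Floer differential lowers the action by exactly the energy of the connecting strip, that argument yields a bound of the form
\[
b_\eps\big(L,L^k\big)\ \le\ C_0+s\big(k,\delta(\eps)\big),
\]
where $C_0$ bounds the number of infinite bars and $s(k,\delta)$ is the maximal cardinality of a $\delta$-separated set for the dynamical metric $d_k(x,y)=\max_{0\le i\le k}d(\varphi^i x,\varphi^i y)$. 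The crossing-energy bound is isoperimetric --- a strip displacing a definite distance encloses a definite area --- so $\delta(\eps)$ degenerates at most polynomially, say $\delta(\eps)\gtrsim\sqrt{\eps}$. Hence if $\{\eps_k\}$ is subexponential then so is $\{\delta_k\}$ with $\delta_k:=\delta(\eps_k)$, i.e.\ $\tfrac1k\log^+(1/\delta_k)\to 0$, and $b_{\eps_k}(L,L^k)\le C_0+s(k,\delta_k)$.

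It remains to estimate $s(k,\delta_k)$; the guiding principle is that a subexponential decay of the separation scale costs nothing in exponential rate. Since $\varphi$ is smooth and compactly supported it is globally Lipschitz with some constant $\Lambda$, and a volume comparison --- a $d_k$-ball of radius $\delta$ contains an ordinary ball of radius $\delta\Lambda^{-k}$ --- gives $s(k,\delta_k)\le C\delta_k^{-\dim M}\Lambda^{k\dim M}$, which is $\le 2^{c'k}$ for large $k$ because $\{\delta_k\}$ is subexponential; this already proves the first assertion. For the sharp bound by $\htop(\varphi)$, fix a scale $\delta_0>0$ and split a maximal $(k,\delta_k)$-separated set over a minimal $(k,\delta_0)$-spanning set of cardinality $r(k,\delta_0)$:
\[
s(k,\delta_k)\ \le\ r(k,\delta_0)\cdot\sup_{x\in M}N\big(B_{d_k}(x,\delta_0),\,d_k,\,\delta_k\big),
\]
where $N(\,\cdot\,,d_k,\delta_k)$ denotes the covering number by $d_k$-balls of radius $\delta_k$. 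The first factor has exponential rate $\le\limsup_k\tfrac1k\log s(k,\delta_0)\le\htop(\varphi)$ by the separated-set definition of topological entropy, while the second factor --- the complexity of $\varphi$ inside a fixed small $d_k$-ball at the refined scale $\delta_k$ --- has exponential rate tending to $0$ as $\delta_0\to 0$, because $\varphi$ is $C^\infty$ and hence has vanishing local entropy: here one invokes Yomdin's reparametrization theory in the form bounding the number of $\delta_k$-separated pieces of the $k$-th iterate of a small ball, which remains subexponential under a subexponentially shrinking $\delta_k$. Letting $k\to\infty$ and then $\delta_0\to 0$ gives $\hhbr_{\{\eps_k\}}(\varphi;L,L')\le\htop(\varphi)$, and taking the supremum over subexponential sequences $\{\eps_k\}$ completes the argument.

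The hard part is the bound on the second factor just above: the number of $\delta_k$-separated points inside a fixed-scale $d_k$-ball must be controlled even though $\delta_k\to 0$. A naive volume estimate inside such a ball still carries a factor $\Lambda^{k\dim M}$ and is useless here, so one genuinely has to use the $C^\infty$-hypothesis, via Yomdin's machinery, and verify that it tolerates an exponentially slowly shrinking scale. This is precisely where subexponentiality of $\{\eps_k\}$ is essential: for an exponentially small $\eps_k$ the scale $\delta_k$ would already resolve all the $k$-periodic points of $\varphi$, and it is their number --- possibly superexponential --- rather than $\htop(\varphi)$ that would then govern the count.
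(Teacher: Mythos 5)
Your proposal diverges from the paper's proof in a way that introduces two genuine gaps, both of which you partially acknowledge but do not close.

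First, you attribute to \cite[Thm.\ 5.1]{CGG:Entropy} a ``crossing-energy reduction'' that would yield a bound
$b_\eps(L,L^k)\le C_0+s(k,\delta(\eps))$
in terms of $d_k$-separated sets. That is not how the cited theorem is proved, and the bound itself is never derived in your argument. The difficulty is structural: bars of the Floer complex $\CF(L,L^k)$ are indexed by pairs of intersection points of $L$ and $L^k$, and there is no ready-made dictionary sending each long bar to a $\varphi$-orbit segment that is $\delta(\eps)$-separated in the dynamical metric from the segments associated with the other long bars. Establishing such a dictionary (with a $k$-independent $\delta(\eps)$) would essentially be a new and self-contained proof of the upper-bound theorem, and nothing in the crossing-energy machinery cited hands it to you. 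Your subsequent polynomial relation $\delta(\eps)\gtrsim\sqrt\eps$ is likewise asserted without justification and would be needed in a precise form, since it is exactly what lets you propagate subexponentiality from $\{\eps_k\}$ to $\{\delta_k\}$.

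Second, even granting the reduction to $s(k,\delta_k)$, the local-complexity estimate is the crux and remains open in your write-up. You correctly observe that the naive volume bound carries a factor $\Lambda^{k\dim M}$ and is useless, and that the $C^\infty$ hypothesis must enter through Yomdin-type reparametrization to show that the covering numbers $N(B_{d_k}(x,\delta_0),d_k,\delta_k)$ have vanishing exponential rate as $\delta_0\to0$. But the standard statement that smooth maps have zero tail entropy fixes the inner scale $\delta$, takes $\limsup_k$, and only then lets $\delta\to0$; you need the diagonal version where the inner scale $\delta_k$ shrinks with $k$. That this diagonal is harmless for subexponential $\delta_k$ is exactly what must be proved, and you explicitly label it ``the hard part'' without resolving it. Without this, the sharp bound by $\htop(\varphi)$ is not established.

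The paper's proof sidesteps both issues by never passing through separated sets at all. One builds a Lagrangian tomograph $\Psi\colon B\times L\to M$, applies Crofton's inequality (Lemma \ref{lemma:Crofton}) to get
$\int_B N_k(s)\,ds\le C_{\Cr}\vol(L^k)$,
restricts the integral to a ball $B_k$ of radius $\sim\eps_k$ where $b_{\eps_k/2}(L_s,L^k)\ge b_{\eps_k}(L_0,L^k)$ by Hofer-continuity of the barcode, and obtains
$\vol(L^k)\ge C_{\Cr}^{-1}\vol(B_k)\,b_{\eps_k}(L_0,L^k)$.
Taking $\log^+$ and dividing by $k$, the factor $\vol(B_k)\sim\eps_k^{\,d}$ contributes a term $d\cdot\frac{\log^+\eps_k}{k}\to0$ precisely because $\{\eps_k\}$ is subexponential, and Yomdin's theorem bounds $\limsup_k\frac{1}{k}\log^+\vol(L^k)$ by $\htop(\varphi)$. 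Subexponentiality enters in a single, elementary place, and the only analytic input beyond Crofton is the classical Yomdin volume-growth theorem, not a diagonal refinement of it. Your route, if it could be completed, would give a different (orbit-counting) proof, but as written it has two unfilled gaps where the paper's argument has none.
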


Note that, as a consequence,
$\hhbr_{\{\eps_k\}}(\varphi; L,L')<\infty$ and
$\hhbr(\varphi; L,L')<\infty$ which is \emph{a priori} not obvious. We
prove Theorem \ref{thm:growth} in Section \ref{sec:proofs}. Here we
only point out that the proof of Theorem \ref{thm:growth} ultimately
relies on Yomdin's theorem, \cite{Yo}, and in the theorem and
throughout the paper all maps and submanifolds are assumed to be
$C^\infty$-smooth.

Applying Theorem \ref{thm:growth} to $\id\times\varphi$ and the
diagonal, we arrive at a refinement of \cite[Thm.\ A]{CGG:Entropy}:

\begin{Corollary}
  \label{cor:growth}
  Let $\{\eps_k\}$ be a subexponential sequence. Then
  $b_{\eps_k}\big(\varphi^k\big)$ grows at most exponentially, and
$$
\hhbr(\varphi)\leq\htop (\varphi).
$$
\end{Corollary}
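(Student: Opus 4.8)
The plan is to revisit the proof of \cite[Thm.\ 5.1]{CGG:Entropy} and to extract from it an \emph{explicit}, polynomial-in-$1/\eps$ bound for the bar count in terms of a Riemannian volume; once this is in hand, Yomdin's theorem together with the subexponentiality of $\{\eps_k\}$ finishes the argument at once. Fix an auxiliary Riemannian metric on $M$ and set $n=\tfrac12\dim M$. The target estimate is that there is a constant $C=C(L,M)$ with
\[
  b_\eps(L,W)\ \leq\ C\,\eps^{-n}\,\vol_n(W)
\]
for every $\eps>0$ and every closed Lagrangian $W$ Hamiltonian isotopic to $L'$. One first reduces to the transverse case: by \eqref{eq:b-eps3} the liminf defining $b_{\eps_k}(L,L^k)$ is attained, so for each $k$ we may fix $W_k\pitchfork L$, Hamiltonian isotopic and $C^\infty$-close to $L^k=\varphi^k(L')$, with $b_{\eps_k}(L,L^k)=b_{\eps_k}(L,W_k)$ and $\vol_n(W_k)\leq\vol_n\big(\varphi^k(L')\big)+1$, so that it suffices to bound $b_{\eps_k}(L,W_k)$.

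The estimate itself is, in substance, contained in \cite{CGG:Entropy}, where it is used only for a \emph{fixed} threshold, and its proof combines two ingredients. First, a bar of length greater than $\eps$ in $\CB(L,W)$ is ``$\eps$-robust'': by the stability inequality \eqref{eq:intersections} (taking $\delta\nearrow\eps$) the corresponding intersection point of $L$ with $W$ cannot be removed by any Hamiltonian isotopy of Hofer norm less than $\eps/2$. Second, a cut-off/displacement argument in a fixed Weinstein neighborhood of $L$ shows that a part of $W$ lying in a ball of radius of order $\eps$ about a point of $L$ and carrying less than $c_0\eps^n$ of $n$-volume (for a suitable $c_0=c_0(L,M)$) can be pushed off $L$ inside a slightly larger ball by a Hamiltonian isotopy of Hofer norm $O(\eps)$, and such isotopies with disjoint supports compose without increasing the Hofer norm; hence each $\eps$-robust bar must be charged a definite amount, of order $\eps^n$, of the volume of $W$ near $L$, and, these charges overlapping with bounded multiplicity, a covering argument yields the displayed bound. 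I expect verifying that the resulting constant is \emph{polynomial} in $1/\eps$, rather than merely finite, to be the main obstacle: it requires rerunning the covering and displacement bookkeeping with constants uniform in $\eps$, using that $L$ is closed (hence of bounded geometry) and that the cut-off Hamiltonians can be chosen with $C^1$-norm $O(\eps)$.

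Granting $b_\eps(L,W)\leq C\eps^{-n}\vol_n(W)$, Yomdin's theorem \cite{Yo} --- applied to the diffeomorphism $\varphi$, extended to a closed manifold if necessary, which is harmless since $\varphi$ is compactly supported and $L'$ is closed --- gives $\limsup_{k\to\infty}\tfrac1k\log^+\vol_n\big(\varphi^k(L')\big)\leq\htop(\varphi)$. Combining this with the reduction above and $b_{\eps_k}(L,L^k)=b_{\eps_k}(L,W_k)\leq C\eps_k^{-n}\big(\vol_n(\varphi^k(L'))+1\big)$, we get
\[
  \hhbr_{\{\eps_k\}}(\varphi;L,L')=\limsup_{k\to\infty}\frac{\log^+b_{\eps_k}(L,L^k)}{k}\leq\limsup_{k\to\infty}\frac{n\log^+(1/\eps_k)+\log^+\vol_n\big(\varphi^k(L')\big)+O(1)}{k}\leq\htop(\varphi),
\]
since $\tfrac1k\log^+(1/\eps_k)\to0$ by the subexponentiality condition \eqref{eq:subexp2}. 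In particular $b_{\eps_k}(L,L^k)$ grows at most exponentially, and passing to the supremum over all subexponential sequences $\{\eps_k\}$ --- legitimate by \eqref{eq:order} --- gives $\hhbr(\varphi;L,L')\leq\htop(\varphi)$. Finally, Corollary \ref{cor:growth} is the special case of the theorem obtained by applying it to $\id\times\varphi$ on $\big(M\times M,(-\omega,\omega)\big)$ with $L=\Delta=L'$, since $\htop(\id\times\varphi)=\htop(\varphi)$.
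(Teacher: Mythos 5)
Your overall strategy---reduce to a polynomial-in-$1/\eps$ bound $b_\eps(L,W)\leq C\,\eps^{-m}\vol(W)$, feed it into Yomdin, and use subexponentiality of $\{\eps_k\}$ to kill the $\eps_k^{-m}$ correction---is the right skeleton, and your reductions (transverse approximation via \eqref{eq:b-eps3}, application to $\id\times\varphi$ with $L=\Delta$, the supremum over subexponential sequences) are all correct. But the engine you propose for the volume estimate is not the one in the paper or in \cite{CGG:Entropy}, and as sketched it has a genuine gap.

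The paper proves the volume estimate via the \emph{Lagrangian tomograph} and Crofton's inequality (Lemma \ref{lemma:Crofton}): one constructs a $d$-parameter family $L_s$, $s\in B^d$, of Hamiltonian deformations of $L$ with $d_{\hn}(L_0,L_s)=O(\|s\|)$, uses Crofton to get $\int_B|L_s\cap L^k|\,ds\leq C_{\Cr}\vol(L^k)$, and then the Hofer-stability \eqref{eq:intersections} to lower-bound the integrand by $b_{\eps_k}(L_0,L^k)$ over a ball $B_k$ of radius $\sim\eps_k$. This gives $b_{\eps_k}(L,L^k)\leq C\,\eps_k^{-d}\vol(L^k)$ with $d=\dim B$, which is not $n=\dim L$ but is a constant, so the same subexponential cancellation applies. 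In contrast, your proposal replaces this by a local displacement-and-covering argument in a Weinstein neighborhood, claiming each bar of length $>\eps$ ``charges'' $\gtrsim\eps^n$ of volume of $W$ near a single intersection point. That step is not established. The stability inequality \eqref{eq:intersections} controls the \emph{aggregate} count $b_\eps$ under a global Hofer-small perturbation; it does not say that a \emph{particular} intersection point survives all such perturbations, so the passage from ``a bar of length $>\eps$'' to ``an $\eps$-robust intersection point to which one may attach a definite local volume'' is unjustified --- intersection points can be destroyed and created in pairs while $b_\eps$ stays fixed. One could try to argue globally (if every local piece of $W$ near every intersection point had volume $<c_0\eps^n$, displace them all at once and contradict $b_\eps>0$), but that only yields one large piece, not a count proportional to $b_\eps$; making the bookkeeping rigorous is essentially the content of the tomograph/Crofton device, which you are implicitly trying to re-derive. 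Finally, the attribution ``in substance, contained in \cite{CGG:Entropy}'' of the displacement argument is inaccurate: what is in \cite{CGG:Entropy} is precisely the tomograph argument, used there with a fixed threshold, and the present paper's contribution is to observe that its $\eps$-dependence is polynomial, so a $k$-dependent $\eps_k$ can be absorbed.

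So: keep the reduction and the Yomdin step, but replace the displacement/covering heuristic by the tomograph construction with a shrinking parameter ball $B_k$ of radius $\delta_k\sim\eps_k$, which is exactly how the paper obtains \eqref{eq:vol-b} and \eqref{eq:eps-k-ineq}.
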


As a consequence of Theorem \ref{thm:growth} and Corollary
\ref{cor:growth}, $b_{\eps_k}(L,L^k)$ and
$b_{\eps_k}\big(\varphi^k\big)$ grow at most exponentially whenever
the sequence $\{\eps_k\}$ is subexponential:
$$
    b_{\eps_k}(L,L^k)\leq 2^{c k} \textrm{ and }
    b_{\eps_k}\big(\varphi^k\big)\leq 2^{c k}
    \textrm{ for large } k, 
$$
where we can take any $c>\htop(\varphi)$. These inequalities refine
\eqref{eq:growth-L} and \eqref{eq:growth-phi}.

Recall that $\hbr(\varphi)\geq \htop(\varphi|_K)$ for any (closed)
hyperbolic subset $K$; \cite[Thm.\ B]{CGG:Entropy}.  By
\eqref{eq:entropies}, this lower bound, as any lower bound on
$\hbr(\varphi)$, also holds for $\hhbr(\varphi)$.  Furthermore, by
\cite[Thm.\ C]{CGG:Entropy}, $\hbr(\varphi)=\htop (\varphi)$ when $M$
is a surface, and thus we have the following result.

\begin{Corollary}
  \label{cor:2D}
Assume that $\varphi$ is a compactly supported Hamiltonian
diffeomorphism of a surface. Then 
\begin{equation}
  \label{eq:all-entropies}
\hbr(\varphi)=\hhbr_{\{\eps_k\}}(\varphi)
=\hhbr(\varphi)=\htop(\varphi),
\end{equation}
whenever $\{\eps_k\}$ is subexponential and $\eps_k\to 0$.
\end{Corollary}

To summarize, sequential barcode entropy has essentially the same key
properties as the barcode entropy originally defined in
\cite{CGG:Entropy}. For instance, properties (i)-(iv) from Prop.\ 4.4
therein holds for sequential barcode entropy too. A possible exception
is the Hofer lower semi-continuity of the relative barcode entropy in
the Lagrangian (part (v) of Prop.\ 4.4). Unlike the properties
(i)--(iv), the proof of (v) from \cite{CGG:Entropy} does not carry
over to the sequential case. On the other hand, as we mentioned above,
it is entirely possible that in general the two entropies are
equal. Yet, Corollary \ref{cor:2D} in its full form certainly does not
generalize to higher dimensions. Namely, there are examples of
Hamiltonian diffeomorphisms where $\dim M\geq 6$ and the last equality
in \eqref{eq:all-entropies} turn into a strict inequality with
$\htop(\varphi)>0$ and
$\hbr(\varphi)=\hhbr_{\{\eps_k\}}(\varphi) =\hhbr(\varphi)=0$; see
\cite{Ci}.

\begin{Remark}[Topological sequential entropy]
  One could also modify the definition of topological entropy in a way
  similar to sequential barcode entropy. The resulting ``sequential''
  topological entropy is equal to the topological entropy for
  $C^\infty$-maps of compact manifolds $M$. This is a consequence of
  Yomdin's theory \cite[Prop. 3.10]{Bu}.  (We are grateful to
    David Burguet for explaining to us the connection and a proof of
    the equality.)  On the other hand, it is not hard to construct a
  $C^0$-map with zero topological entropy and, for instance, infinite
  ``sequential'' topological entropy.
  
  Namely, let $(M, g)$ be a closed Riemannian manifold of
  $\dim M \geq 2$ and $B_j \subset M$ be a sequence of
  disjoint balls of subexponentially decreasing radius
 $\delta_j \to 0$. We can take any such sequence
    $\delta_j$. For each $j \in \N$, there is a diffeomorphism
  $\varphi_j$ of $M$ (which can be taken to be a Hamiltonian
  diffeomorphism if the manifold is symplectic) supported in $B_j$
  with $\htop(\varphi_j) =0$ and such that the maximal number of
  $\delta_j/j$-separated points with respect to the metric
  $d_k^{\varphi_j} (x,y) := \max_{0 \leq i \leq k-1} d_g
  (\varphi_j^i(x), \varphi_j^i(y))$ is greater than $j^j$
  for $k=j$, and hence all $k\geq j$. To construct such
    a diffeomorphism, we first show that there exist $j^j$ disjoint
  finite sequences of $j$ points $x_0^s,\ldots, x_{j-1}^s$,
    $s=1,\dots, j^j$, in $B_j$ which are $\delta_j/j$-separated,
  i.e., $\max_id_g \big(x_i^s,x_i^{s'}\big)>\delta_j/j$
    whenever, $s\neq s'$. Then we define $\varphi_j\colon B_j\to B_j$
  on disjoint path-connected neighborhoods of these sequences,
    turning the sequences into orbits and making sure that
    the resulting map $\varphi_j$ has zero topological
    entropy. Let $\varphi \colon M \to M$ be the map given
    by composing all $\varphi_j$ or, equivalently, taking
    their ``disjoint union''. By construction, the ``sequential''
  topological entropy of $\varphi$ is infinite but
  $\htop(\varphi)=0$. (If, instead of $j^j$, we took $2^j$
  sequences, we would get a map with positive, but possibly
  finite, ``sequential'' topological entropy and zero topological
  entropy.)

\end{Remark}

\begin{Remark}[Lower bounds on the growth of
  $b_\eps\big(\varphi^k\big)$]
  \label{rmk:b-eps-growth}
  A related question is that of a lower bound on
  $b_\eps\big(\varphi^k\big)$ or $b_\eps\big(L,L^k\big)$, although it
  is not entirely clear how to pose this question in a meaningful
  way. The difficulty is that one cannot expect any particular growth
  behavior without additional conditions on $\varphi$ or $M$. For
  instance, obviously $b_\eps\big(\varphi^k\big)=n+1$ for all $\eps>0$
  when $\varphi$ is a non-degenerate pseudo-rotation of $\CP^n$; see
  also Proposition \ref{prop:AP}. We refer the reader to Section
  \ref{sec:AI+PR} for more details on pseudo-rotations.  Furthermore,
  when $\dim M=2$ and $\varphi$ is autonomous, or even integrable, we
  conjecture that $b_\eps\big(\varphi^k\big)$ grows at most
  polynomially with $k$. On the other hand, in all dimensions, when
  $\varphi$ has a locally maximal hyperbolic subset $K$ with
  $\htop(\varphi|_K)>0$, the sequence
  $k\mapsto b_\eps\big(\varphi^{kN}\big)$ grows exponentially for some
  $N\in \N$ and $\eps>0$. This is a consequence of \cite[Thm.\
  18.5.6]{KH} and \cite[Prop.\ 3.8 and 6.2]{CGG:Entropy}. Therefore,
  by \cite{LCS}, $b_\eps\big(\varphi^{kN}\big)$ grows exponentially
  for $C^\infty$-generic $\varphi$ when $\dim M=2$. (To be more
  precise, the set of $\varphi$ such that this sequence grows
  exponentially for some $\eps>0$ and $N$ depending on $\varphi$ is
  $C^\infty$-residual.) When the genus of $M$ is positive we do not
  have any example of a strongly non-degenerate $\varphi$ with
  $\htop(\varphi)=0$, although we believe that such Hamiltonian
  diffeomorphisms exist. It is not clear what growth of
  $b_\eps\big(\varphi^k\big)$ one should expect in this case.
  \end{Remark}

  \subsection{Lower bounds on the $\gamma$-norm}
\label{sec:gamma}
As observed in \cite[Sect.\ 6.1.5]{CGG:Entropy} our results on barcode
and topological entropy and their proofs yield as a byproduct lower
bounds on the spectral norm a.k.a.\ $\gamma$-norm of the iterates of
$\varphi$. One of such results is Proposition \ref{prop:gamma}, stated
below, which refines \cite[Prop.\ 6.5]{CGG:Entropy}.

Recall that when $M^{2n}$ is a closed symplectic manifold the
$\gamma$-norm of a Hamiltonian diffeomorphism $\varphi$ is defined as
$$
\gamma(\varphi) = \inf\big\{\s(H)+\s\big(H^{\inv}\big)\mid \varphi
=\varphi_H\big\},
$$
where $H^{\inv}$ is the Hamiltonian generating the flow
$\big(\varphi_H^{t}\big)^{-1}$ and $\s$ is the spectral invariant
associated with the fundamental class $[M]\in \H_{2n}(M)$. Then
$$
\gamma(\varphi)\leq \|\varphi\|_{\hn}.
$$
We refer the reader to, e.g., \cite{EP, Oh:gamma, Oh:constr, Sc, Vi}
for the proof and a further discussion of the $\gamma$-norm.

\begin{Proposition}
  \label{prop:gamma}
  Let $\varphi\colon M\to M$ be a Hamiltonian diffeomorphism of a
  closed weakly monotone symplectic manifold with more than
  $\dim \H_*(M)$ hyperbolic periodic points.  Then the sequence
  $\gamma\big(\varphi^k\big)$, $k\in\N$, is bounded away from zero.
\end{Proposition}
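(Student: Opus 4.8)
The plan is to estimate $\gamma$ from below by the boundary depth and to exploit the robustness of hyperbolic orbits in the Floer barcode under iteration. Since $\gamma(\psi)\geq\beta_{\max}(\psi)$ for every Hamiltonian diffeomorphism $\psi$ (see \cite{Us1,Us2,KS}), it suffices to produce, for each $k$, a finite bar in the barcode of $\varphi^k$ of length bounded below by a constant independent of $k$. First I would pass to a fixed power: write $x_1,\dots,x_N$ for the hyperbolic periodic points, $N>\dim\H_*(M)$, let $\ell$ be a common multiple of their periods, and set $\psi:=\varphi^\ell$. Each $x_i$ is then a hyperbolic fixed point of $\psi$ and, since a power of a hyperbolic linear map is hyperbolic, of every iterate $\psi^j$ as well. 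It is therefore enough to show $\gamma(\psi^j)\geq\delta$ for all $j$ with $\delta>0$ independent of $j$: for arbitrary $k$ the subadditivity of $\gamma$ yields $\gamma(\psi^k)=\gamma\big((\varphi^k)^\ell\big)\leq\ell\,\gamma(\varphi^k)$, whence $\gamma(\varphi^k)\geq\delta/\ell$ for all $k\in\N$, which is the assertion.

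The analytic core is the uniform crossing energy estimate for iterates of hyperbolic orbits from \cite{CGG:Entropy}: there exist $\delta>0$ and neighborhoods $U_i\ni x_i$ such that, for every $j$, any Floer cylinder for $\psi^j$ asymptotic to some $x_i$ at one end and leaving $U_i$ carries energy at least $\delta$. I would first perform a $C^\infty$-small perturbation, supported away from the $x_i$ (which, being hyperbolic, persist with nearby action values), so that $\psi^j$ is nondegenerate, and I would work in the class of contractible orbits, where $\gamma$ and the boundary depth are read off. The crossing energy bound then yields the familiar dichotomy for each $x_i$: either its local Floer generator survives to $\HF(\psi^j)$, so that $x_i$ is homologically essential, or $x_i$ is an endpoint of a finite bar of $\CB(\psi^j)$ of length at least $\delta$ — because the Floer differential strictly decreases action and, $x_i$ being isolated in $U_i$, any non-trivial cylinder with an asymptote at $x_i$ leaves $U_i$ and hence shifts action by at least the crossing energy, so the bar of which $x_i$ is an endpoint must be at least that long.

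It then remains to count. After the perturbation the $x_i$ have distinct actions and, by the crossing energy bound, each of them is associated to exactly one bar of $\CB(\psi^j)$. Since $\dim_\Lambda\HF(\psi^j)=\dim\H_*(M)$, the barcode $\CB(\psi^j)$ has at most $\dim\H_*(M)$ infinite bars, so at most $\dim\H_*(M)$ of the $x_i$ can be homologically essential. As $N>\dim\H_*(M)$, at least one $x_i$ is therefore an endpoint of a finite bar, which by the previous step has length $\geq\delta$. Hence $\beta_{\max}(\psi^j)\geq\delta$, so $\gamma(\psi^j)\geq\delta$, and combining this with the reduction above gives $\gamma(\varphi^k)\geq\delta/\ell$ for all $k\in\N$.

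The step I expect to carry all the analytic weight is the uniformity of the crossing energy $\delta$ over the iterates $\psi^j$: this is exactly the input supplied by \cite{CGG:Entropy}, resting ultimately on the analysis of the local Floer homology of iterated hyperbolic orbits, in the spirit of Yomdin's theorem used elsewhere in the paper. Without it the individual bounds $\gamma(\psi^j)>0$ would be of no use, since they could degenerate as $j\to\infty$. By contrast, the inequality $\gamma\geq\beta_{\max}$, the robustness of a hyperbolic generator given a crossing energy bound, the homological count, and the subadditivity reduction from periodic points to fixed points are all comparatively routine; what this gains over \cite[Prop.\ 6.5]{CGG:Entropy} is precisely the sharp threshold $N>\dim\H_*(M)$ together with a clean uniform constant.
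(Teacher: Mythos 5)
Your proof is correct, and steps (1)--(3) coincide with the paper's: both use the uniform crossing energy bound from \cite{CGG:Entropy} (Prop.\ 3.8 and 6.2 there) to show that among the $>\dim\H_*(M)$ hyperbolic generators of the Floer complex of $\varphi^{\ell j}$ at least one must bound a finite bar of length $\geq\delta$, uniformly in $j$, and then convert this to $\gamma(\varphi^{\ell j})\geq\delta$ via the boundary-depth inequality $\beta_{\max}\leq\gamma$ from \cite[Thm.\ A]{KS}. Where you genuinely diverge is the passage from the arithmetic progression $\ell\N$ to all of $\N$. The paper argues by contradiction: it supposes $\gamma(\varphi^{k_i})\to 0$ along some subsequence, uses the triangle inequality to show that for large $i<j$ the difference $k_j-k_i$ cannot be a multiple of the chosen period, and then derives a contradiction from the pigeonhole principle on residues. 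Your direct route, $\delta\leq\gamma(\varphi^{\ell k})\leq\ell\,\gamma(\varphi^k)$, replaces this with a single application of subadditivity and yields the explicit uniform constant $\delta/\ell$. Both reductions ultimately rest on the same two ingredients --- the triangle inequality for $\gamma$ and the uniform lower bound along $\ell\N$ --- but yours is shorter, avoids the indirect argument, and makes the constant visible, which is a genuine (if modest) simplification. One small presentational caveat: the bar endpoints of a filtered Floer complex are attached to a singular-value basis in the sense of \cite{UZ}, not directly to the geometric generators $x_i$, so the phrase ``each of them is associated to exactly one bar'' should be understood via the local-Floer-homology dichotomy supplied by the crossing energy estimate rather than as a tautology about generators; the paper's appeal to \cite[Prop.\ 6.2]{CGG:Entropy} is making exactly that precise.
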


We prove this proposition in Section \ref{sec:pf-prop-gamma}. Note
that having more than $\dim \H_*(M)$ hyperbolic periodic points, or
more than any fixed number of hyperbolic periodic points, is an open
property in $C^1$-topology. By the Conley conjecture \cite{FH, Gi:CC,
  SZ} every Hamiltonian diffeomorphism $\varphi$ of a positive genus
surface $\Sigma_{g\geq 1}$ has infinitely many periodic points. By the
Lefschetz formula, roughly speaking, at least half of these periodic
points are hyperbolic if $\varphi$ is strongly non-degenerate which is
the case $C^\infty$-generically (recall that, in dimension two, an
elliptic or a negative hyperbolic fix point has Lefschetz index 1 and
a positive hyperbolic one has index $-1$). Hence we have:
\begin{Corollary}
\label{cor:conley}
There exists a $C^1$-open neighborhood
$U \subset \Ham(\Sigma_{g \geq 1}, \omega)$ of the set of strongly
non-degenerate Hamiltonian diffeomorphisms of $\Sigma_{g \geq 1}$ such
that the sequence $\gamma\big(\varphi^k\big)$, $k\in\N$, is bounded
away from zero for every $\varphi \in U$. In particular, this sequence
is bounded away from zero $C^\infty$-generically.
\end{Corollary}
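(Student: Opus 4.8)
The plan is to take $U$ to be the set of compactly supported Hamiltonian diffeomorphisms of $\Sigma_{g\geq 1}$ that possess more than $\dim\H_*(\Sigma_{g\geq 1})=2g+2$ hyperbolic periodic points, and then to apply Proposition \ref{prop:gamma}. Thus there are three things to do: (i) show that $U$ contains every strongly non-degenerate map; (ii) show that $U$ is $C^1$-open; (iii) deduce the $C^\infty$-generic statement.

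For (i), which is the main point and the hardest one, I would argue as follows. By the Conley conjecture for surfaces \cite{FH, Gi:CC, SZ}, a strongly non-degenerate $\varphi\in\Ham(\Sigma_{g\geq 1},\omega)$ has infinitely many simple periodic orbits, of unbounded period. In dimension two, strong non-degeneracy forces each such orbit to be either elliptic --- the eigenvalues of the linearized return map lie on the unit circle and are not roots of unity --- or hyperbolic --- the eigenvalues are real and different from $\pm1$. The mean index of an elliptic orbit is irrational, so the Conley--Zehnder indices of its iterates equidistribute modulo $1$ and drift to $\pm\infty$; on the other hand $\HF_*(\varphi^k)\cong\H_{*+1}(\Sigma_{g\geq 1})$ has fixed total dimension $2g+2$ and is supported in a bounded range of degrees independent of $k$. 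Running the index bookkeeping along the iterated Floer complexes, exactly as in the proof of the Conley conjecture on surfaces --- where the only competing scenario is the presence of a symplectically degenerate maximum, which is ruled out by non-degeneracy --- one concludes that only finitely many of the simple periodic orbits can be elliptic; hence infinitely many of them (``roughly at least half'', cf.\ \cite{CGG:Entropy}) are hyperbolic, so in particular $\varphi\in U$. Making this counting argument precise is the main obstacle.

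For (ii), having more than a fixed number $N$ of hyperbolic periodic points is a $C^1$-open condition: each hyperbolic periodic point of period $k$ persists, remains hyperbolic, and remains distinct from the finitely many others under a sufficiently $C^1$-small perturbation of $\varphi$, by the implicit function theorem applied to $\varphi^k-\id$ near the point together with the openness of hyperbolicity of its linearization. With $N=2g+2$ this shows that $U$ is $C^1$-open, and Proposition \ref{prop:gamma} then applies verbatim to every $\varphi\in U$, giving that $\gamma\big(\varphi^k\big)$ is bounded away from zero there. Finally, for (iii): strong non-degeneracy is a $C^\infty$-residual condition, being the intersection over $k\in\N$ of the countably many $C^\infty$-open and dense subsets of $\Ham(\Sigma_{g\geq 1},\omega)$ on which $\varphi^k$ is non-degenerate. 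Since $U$ is $C^\infty$-open (being $C^1$-open) and, by (i), contains this residual set, $U$ is itself $C^\infty$-residual; in particular $\gamma\big(\varphi^k\big)$ is bounded away from zero for a $C^\infty$-generic $\varphi$, which is the assertion of the corollary.
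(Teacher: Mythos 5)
Your overall strategy --- taking $U$ to be the set of Hamiltonian diffeomorphisms with more than $\dim\H_*(\Sigma_{g\geq 1})=2g+2$ hyperbolic periodic points, observing that this is $C^1$-open, and applying Proposition \ref{prop:gamma} --- is exactly the paper's, and steps (ii) and (iii) are fine. The problem is in step (i), specifically the claim that ``only finitely many of the simple periodic orbits can be elliptic.'' This is false, and not what the ``roughly at least half'' remark in the paper asserts. A strongly non-degenerate Hamiltonian diffeomorphism of a surface can easily have infinitely many \emph{simple} elliptic periodic orbits: for instance, a generic $C^\infty$-small Hamiltonian perturbation of an integrable twist map on $\T^2$ has Birkhoff island chains with elliptic centers of arbitrarily high period, and strong non-degeneracy does not preclude this. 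Your index-drift argument only controls the Conley--Zehnder indices of the \emph{iterates of a fixed} elliptic orbit (which must eventually leave any bounded degree range, so those iterates cancel in Floer homology); it says nothing about the number of distinct elliptic simple orbits.

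The correct reason at least roughly half of the periodic points are hyperbolic is a parity/Euler-characteristic argument, which is what the paper has in mind. On a surface, a non-degenerate fixed point of $\varphi^k$ has even Conley--Zehnder index if and only if it is positive hyperbolic; elliptic and negative hyperbolic points have odd index. The Euler characteristic of $\CF\big(\varphi^k\big)$ equals $\pm\chi(\Sigma_g)=\mp(2-2g)$, so the numbers of even-index and odd-index $k$-periodic points differ by the fixed constant $|2-2g|$. Since the total number of $k$-periodic points tends to infinity with $k$ by the Conley conjecture, the number of positive hyperbolic $k$-periodic points also tends to infinity, and in particular exceeds $2g+2$ for $k$ large. (Note these are all periodic points of $\varphi$ itself.) With this fix, the rest of your argument goes through and the proof coincides with the paper's.
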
 

\begin{Remark} 
\label{rmk:sphere}
For a strongly non-degenerate Hamiltonian diffeomorphism $\varphi$ of
$M=S^2$, the sequence $\gamma\big(\varphi^k\big)$ is not bounded away
from zero if and only if $\varphi$ is a pseudo-rotation. Indeed, this
sequence contains a subsequence converging to zero for all, not
necessarily non-degenerate, pseudo-rotations of $\CP^n$; see
\cite{GG:PR}.  In the opposite direction, when $M=S^2$, the existence
of one hyperbolic periodic point is enough to bound the sequence
$\gamma(\varphi^k)$ away from zero. Strictly speaking, we need a
positive hyperbolic periodic point but the positivity can always be
achieved up to passing to an even iteration. Hence, more generally,
without any non-degeneracy assumption, if this is not the case for
$\varphi$, then all periodic points of $\varphi$ are elliptic (by our
conventions 1 is an elliptic eigenvalue, hence in dimension two, all
degenerate fixed points are elliptic). For strongly non-degenerate
Hamiltonian diffeomorphisms $\varphi$, by the Lefschetz formula, this
forces $\varphi$ to be a pseudo-rotation.
\end{Remark}

The conditions of the proposition are satisfied by even a wider margin
when $\varphi$ has a hyperbolic invariant set with positive
topological entropy. (We refer the reader to, e.g., \cite[Sect.\
6]{KH} for the definition and a detailed discussion of
\emph{hyperbolic} invariant sets. Here, all such sets are required to
be compact by definition.) To be more precise, recall that a compact
invariant set $K$ of $\varphi$ is said to be \emph{locally maximal} if
there exists a neighborhood $U\supset K$ such that $K$ is the maximal
invariant subset of $U$ or, in other words, $x\in K$ whenever the
entire orbit $\{\varphi^k(x)\mid k\in\Z\}$ through $x$ is contained in
$U$. For instance, the orbit $K=\{\varphi^k(x)\}$ of a hyperbolic
periodic point $x$ is locally maximal. By \cite[Thm.\ 3.3]{ACW},
whenever $\varphi$ has a hyperbolic invariant set $K$ it also has a
locally maximal hyperbolic invariant set $K'$ with
$\htop\big(\varphi|_{K'}\big)$ arbitrarily close to
$\htop\big(\varphi|_K\big)$. In particular,
$\htop\big(\varphi|_{K'}\big)>0$ if $\htop\big(\varphi|_K\big)>0$, and
$\varphi|_{K'}$ has infinitely many (hyperbolic) periodic orbits; cf.\
\cite[Thm.\ 18.5.6]{KH}. Thus we have proved the following.

\begin{Corollary}
  \label{cor:gamma1}
  Assume that a Hamiltonian diffeomorphism $\varphi\colon M\to M$ has
  a hyperbolic invariant set $K$ with
  $\htop\big(\varphi|_K\big)>0$. Then the sequence
  $\gamma\big(\varphi^k\big)$, $k\in\N$, is bounded away from zero.
\end{Corollary}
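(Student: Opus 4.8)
\emph{Proof proposal.} The plan is to reduce the corollary to Proposition \ref{prop:gamma}: it suffices to show that the hypothesis forces $\varphi$ to have infinitely many hyperbolic periodic points, which is in particular more than the finite number $\dim \H_*(M)$.

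First I would replace $K$ by a better behaved invariant set. By \cite[Thm.\ 3.3]{ACW}, since $\varphi$ has a hyperbolic invariant set $K$, it also has a \emph{locally maximal} hyperbolic invariant set $K'$ with $\htop\big(\varphi|_{K'}\big)$ as close to $\htop\big(\varphi|_K\big)$ as we wish; choosing it close enough we arrange $\htop\big(\varphi|_{K'}\big)>0$. This reduction is the crux of the argument: for a general hyperbolic invariant set positive entropy need not produce a single periodic orbit, whereas local maximality provides the shadowing lemma and the Smale--Bowen structure theory, through which the entropy is carried by periodic orbits.

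Next, since $K'$ is a locally maximal hyperbolic set with $\htop\big(\varphi|_{K'}\big)>0$, the standard theory of such sets --- e.g.\ \cite[Thm.\ 18.5.6]{KH} --- shows that $\varphi|_{K'}$ has infinitely many periodic orbits (indeed, the number of periodic points of period $\leq m$ grows exponentially in $m$). Every periodic point $x$ of $\varphi$ that lies in the hyperbolic set $K'$ is automatically a hyperbolic periodic point of $\varphi$: if $x$ has period $m$, then $D\varphi^m_x$ preserves the hyperbolic splitting $E^s_x\oplus E^u_x=T_xM$ with uniform contraction on $E^s_x$ and expansion on $E^u_x$, so it has no eigenvalue on the unit circle. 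Hence $\varphi$ has infinitely many hyperbolic periodic points.

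In particular $\varphi$ has more than $\dim \H_*(M)$ hyperbolic periodic points, so Proposition \ref{prop:gamma} applies and yields that the sequence $\gamma\big(\varphi^k\big)$, $k\in\N$, is bounded away from zero. I do not expect a genuine obstacle beyond correctly invoking the two cited theorems; the only point requiring a moment of care is the passage from ``periodic orbit inside a hyperbolic invariant set'' to ``hyperbolic periodic point of $\varphi$'', which is the elementary linear-algebra observation recorded above.
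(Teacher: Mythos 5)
Your proposal is correct and matches the paper's own argument step for step: the paper also passes from $K$ to a locally maximal hyperbolic set $K'$ of positive entropy via \cite[Thm.\ 3.3]{ACW}, invokes \cite[Thm.\ 18.5.6]{KH} to get infinitely many (necessarily hyperbolic) periodic orbits in $K'$, and then applies Proposition \ref{prop:gamma}.
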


By the results from \cite{Ka}, $\varphi$ always has a hyperbolic
invariant set when $\dim M=2$ and $\htop(\varphi)>0$. Furthermore,
$C^\infty$-generically $\htop(\varphi)>0$ in dimension two as is
proved in \cite{LCS}. Therefore, we have the following (cf. Corollary
\ref{cor:conley} and Remark \ref{rmk:sphere}).

\begin{Corollary}
  \label{cor:gamma}
  Let $\varphi$ be a Hamiltonian diffeomorphism of
  a surface $M$ with $\htop(\varphi)>0$. Then the sequence
  $\gamma\big(\varphi^k\big)$, $k\in\N$, is bounded away from zero. In
  particular, again, this sequence is bounded away from zero
  $C^\infty$-generically in dimension two.
\end{Corollary}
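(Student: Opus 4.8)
The plan is to combine Corollary \ref{cor:gamma1} with Katok's horseshoe theorem, and then, for the generic assertion, to invoke \cite{LCS}.

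First I would note that $\varphi$ is $C^\infty$, hence $C^{1+\alpha}$ for every $\alpha\in(0,1)$, so Katok's theory applies. Since $\htop(\varphi)>0$, the variational principle produces an ergodic $\varphi$-invariant Borel probability measure of positive metric entropy, and Katok's theorem \cite{Ka} then yields, for each small $\delta>0$, a compact hyperbolic horseshoe $K=K_\delta\subset M$ --- a locally maximal, uniformly hyperbolic invariant set on which $\varphi$ is topologically conjugate to a subshift of finite type --- with $\htop\big(\varphi|_K\big)>\htop(\varphi)-\delta$; choosing $\delta<\htop(\varphi)$ gives $\htop\big(\varphi|_K\big)>0$. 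When $M$ is open this is harmless: $\varphi$ being compactly supported, all nonwandering dynamics --- and thus any such $K$ --- lie in a fixed compact region, so Katok's theorem applies as stated (equivalently, one embeds $\supp\varphi$ into a closed surface without changing $\htop(\varphi)$). The set $K$ is then a hyperbolic invariant set with $\htop\big(\varphi|_K\big)>0$ in the sense demanded by Corollary \ref{cor:gamma1}, so that corollary immediately gives that $\gamma\big(\varphi^k\big)$, $k\in\N$, is bounded away from zero. This proves the first assertion.

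For the second assertion, \cite{LCS} shows that in dimension two the property $\htop(\varphi)>0$ holds $C^\infty$-generically among (compactly supported) Hamiltonian diffeomorphisms of a surface. For every $\varphi$ in the corresponding residual set the first part gives that $\gamma\big(\varphi^k\big)$ is bounded away from zero, which is the claim.

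Thus the corollary is a short synthesis of cited inputs rather than a result with a genuinely hard core --- the substance resides in Corollary \ref{cor:gamma1} (hence in Proposition \ref{prop:gamma} and \cite{ACW}), in Katok's theorem \cite{Ka}, and in \cite{LCS}. The only steps requiring a word of care are: checking that the ``hyperbolic invariant set'' of \cite[Sect.\ 6]{KH} invoked in Corollary \ref{cor:gamma1} is exactly what a Katok horseshoe provides --- it is, and, being already locally maximal, $K$ in fact bypasses the \cite{ACW} upgrade used inside Corollary \ref{cor:gamma1}; and the mild bookkeeping, indicated above, needed to place a compactly supported $\varphi$ of a possibly open surface in a setting where both Katok's theorem and the $\gamma$-norm of Section \ref{sec:gamma} are available. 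Neither point is a real obstacle.
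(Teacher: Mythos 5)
Your proof is correct and follows the same route as the paper: invoke Katok's theorem \cite{Ka} to produce a hyperbolic invariant set (indeed a horseshoe) with positive entropy, feed it into Corollary \ref{cor:gamma1}, and cite \cite{LCS} for the generic statement. Your added observations --- that the horseshoe is already locally maximal so the \cite{ACW} step inside Corollary \ref{cor:gamma1} is not needed here, and the compactification remark for open surfaces --- are accurate elaborations of points the paper leaves implicit, not deviations from its argument.
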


\begin{Remark}
  Actually, to derive from Proposition \ref{prop:gamma} the fact that
  the sequence $\gamma\big(\varphi^k\big)$ is bounded away from zero
  $C^\infty$-generically in dimension two, we do not need to invoke
  results from \cite{FH, Gi:CC, SZ} or \cite{LCS}. Indeed, note that
  whenever $\varphi$ has an elliptic periodic point one can create a
  horseshoe and hence infinitely many hyperbolic periodic points by a
  $C^\infty$-small perturbation. This is a consequence of the
  Birkhoff--Lewis theorem. Then, in the extremely hypothetical
  situation where $\varphi$ does not have elliptic periodic points, a
  standard index argument shows that it must have infinitely many
  hyperbolic periodic points.  
\end{Remark}

\begin{Remark}
\label{rmk:gamma}
The converse of Corollary \ref{cor:gamma} (or Proposition
\ref{prop:gamma}) is not true in general. Even in dimension two, the
sequence $\gamma\big(\varphi^k\big)$, $k\in\N$, can be bounded away
from zero when $\htop(\varphi)=0$. It is easy to construct an
autonomous Hamiltonian diffeomorphism $\varphi$ of a surface of
positive genus with no hyperbolic periodic points and
$\gamma\big(\varphi^k\big)\to \infty$. For instance, as in \cite[Ex.\ 5.6]{Sc}, one can take
$H=\sin(2\pi \theta)$ where $\theta$ is the first angular coordinate
on $\T^2= \R^2/\Z^2$. This is of course impossible for $M=S^2$ because
the $\gamma$-norm in this case is bounded from above; see \cite{EP}
and also \cite{Sh:V,Sh:V2}. Interestingly, other than this fact and
Remark \ref{rmk:sphere} and Corollary \ref{cor:gamma} essentially
nothing seems to be known about the behavior of the $\gamma$-norm
under iterations when $M=S^2$. For instance, we do not know whether
the converse of Corollary \ref{cor:gamma} (or Proposition
\ref{prop:gamma}) is true for $M=S^2$, or even if the sequence
$\gamma\big(\varphi^k\big)$ is bounded away from zero when the
Hamiltonian is autonomous and a convex or concave function of the
latitude.
\end{Remark}

The reader can also find further results, based on \cite{CS}, on the
generic growth of the $\gamma$-norm in \cite{CGG:Spectral}.

\subsection{Approximate identities and Hamiltonian pseudo-rotations}
\label{sec:AI+PR}
In this section, looking at the results from Section \ref{sec:gamma}
from a different perspective, we focus on two classes of maps with
subexponential growth of $b_\eps$: $\gamma$-approximate identities and
Hamiltonian pseudo-rotations of $\CP^n$.

Defining approximate identities, it is useful to work in a greater
generality than needed for our purposes. Consider a class of compactly
supported diffeomorphisms $\varphi$ of a smooth manifold $M$ (e.g.,
all such diffeomorphisms or, as above, compactly supported Hamiltonian
diffeomorphisms, etc.), equipped with some metric, e.g., the $C^0$- or
$C^1$- or $C^r$-metric or the $\gamma$-metric in the Hamiltonian case
which we are interested in here. The norm $\|\varphi\|$ is by
definition the distance from $\varphi$ to the identity. Following
\cite{GG:AI}, we will call $\varphi$ a $\|\cdot\|$-\emph{approximate
  identity}, or a $\|\cdot\|$-\emph{a.i.}\ for the sake of brevity, if
$\varphi^{k_i}\to \id$ with respect to $\|\cdot\|$ for some sequence
$k_i\to\infty$. We will often suppress the norm in the notation. In
dynamics, a.i.'s are usually referred to as \emph{rigid} maps which
sometimes clashes with the same term used for structural stability.
(We believe that a confusion with approximate identities in analysis
is unlikely.)  Approximate identities have been extensively studied,
although usually from a perspective different than ours; see, e.g.,
\cite{A-Z, FK, FKr} and also \cite{GG:AI} for further references.

The definition can be refined or modified in several ways and one such
refinement is of particular interest to us. Namely, for a given
$\eps>0$, consider the iterations $\varphi^{k_i}$ such that
$$
\big\|\varphi^{k_i}\big\|<\eps.
$$
Thus $k_i=k_i(\eps)$ is a strictly increasing sequence. Then $\varphi$
is said to be \emph{$\|\cdot\|$-almost periodic} if for every $\eps>0$
the sequence $k_i$ is \emph{quasi-arithmetic}, i.e., the difference
between any two consecutive terms is bounded by a constant, possibly
depending on $\eps$. Almost periodic maps are closely related to
compact group actions on $M$: $\varphi$ is $C^0$-almost periodic if
and only if the family $\{\varphi^k\}$ is equicontinuous and thus
generates a compact abelian group of (compactly supported)
homeomorphisms,~\cite{GH}.

Almost periodicity and rigidity (the $C^0$-a.i.\ condition) impose
strong restrictions on the dynamics of $\varphi$. For instance, a
$C^0$-a.i.\ clearly cannot be topologically mixing and every point of
$M$ belongs to its $\omega$- and $\alpha$-limit sets. In particular, a
$C^0$-a.i.\ cannot have hyperbolic invariant sets. (See, e.g.,
\cite{A-Z,GG:AI} for more details, examples and references.)
Furthermore, as pointed out in \cite[p.\ 681]{A-Z}, any $C^0$-a.i.\
has zero topological entropy, although this fact is not immediately
obvious.

In the Hamiltonian setting, it is natural to consider a.i.'s and
almost periodic maps $\varphi$ with respect to the $\gamma$-norm and
we are concerned here with the effect of these conditions on the
dynamics of $\varphi$. For instance, by Corollaries \ref{cor:gamma1}
and \ref{cor:gamma}, a $\gamma$-a.i.\ $\varphi$ cannot have locally
maximal hyperbolic invariant sets with positive entropy and in
dimension two we necessarily have $\htop(\varphi)=0$. Here we focus on
barcode entropy and the barcode growth.

\begin{Proposition}
  \label{prop:AP}
  Let $\varphi$ be a $\gamma$-almost periodic Hamiltonian
  diffeomorphism of a closed monotone symplectic manifold $M$ and let
  $L$ and $L'$ be closed Lagrangian submanifolds of $M$ as in Section
  \ref{sec:def}. Then for every $\eps>0$ the sequences
  $b_\eps\big(L,L^k\big)$ and $b_\eps\big(\varphi^k\big)$ are
  bounded. In particular, $\hbar(\varphi; L,L')=0$ and
  $\hbar(\varphi)=0$.
\end{Proposition}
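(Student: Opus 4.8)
The plan is to combine the bounded-gap structure of a $\gamma$-almost periodic sequence of iterates with the stability of the Floer barcode under $\gamma$-small perturbations. Fix $\eps>0$. First I would apply the almost periodicity hypothesis with the threshold $\eps/4$ to obtain a strictly increasing sequence $k_1<k_2<\cdots$ and an integer $N=N(\eps)$ such that $\gamma\big(\varphi^{k_i}\big)<\eps/4$ for every $i$ while $k_{i+1}-k_i\le N$. Then, for every $k\ge k_1$, there is an index $i$ with $k-k_i<k_{i+1}-k_i\le N$; writing $r:=k-k_i\in\{0,\dots,N-1\}$ we have $\varphi^k=\varphi^{k_i}\circ\varphi^r$, hence $L^k=\varphi^{k_i}\big(L^r\big)$ with $L^r:=\varphi^r(L')$, and likewise $\varphi^k\big(\varphi^r\big)^{-1}=\varphi^{k_i}$ in the absolute setting.

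The next step is to invoke the stability of the Floer barcode in the $\gamma$-norm: the bottleneck distance between $\CB\big(L,\psi(L')\big)$ and $\CB(L,L')$ is at most the Lagrangian spectral distance between $\psi(L')$ and $L'$, which in turn is at most $\gamma(\psi)$; in the absolute case, $d_{\mathrm{bot}}\big(\CB(\varphi),\CB(\psi)\big)\le\gamma\big(\varphi\psi^{-1}\big)$ (see \cite{CGG:Entropy, KS, UZ}). Since a $\delta$-matching takes every bar of length greater than $\eps>2\delta$ to a genuine bar of length greater than $\eps-2\delta$, this yields the $\gamma$-counterpart of \eqref{eq:intersections}: with $\delta:=\gamma\big(\varphi^{k_i}\big)<\eps/4$,
$$
b_\eps\big(L,L^k\big)=b_\eps\big(L,\varphi^{k_i}(L^r)\big)\le b_{\eps-2\delta}\big(L,L^r\big)\le b_{\eps/2}\big(L,L^r\big),
$$
and in the same way $b_\eps\big(\varphi^k\big)\le b_{\eps/2}\big(\varphi^r\big)$.

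Finally, I would observe that the right-hand sides only involve the finitely many maps $\varphi^0,\dots,\varphi^{N-1}$, and that for each fixed $r$ the number $b_{\eps/2}\big(L,\varphi^r(L')\big)$ is finite: by \eqref{eq:b-eps3} it equals $b_{\eps/2}\big(L,\tL'\big)$ for a suitable transverse $\tL'$, and $b_{\eps/2}\big(L,\tL'\big)\le b\big(L,\tL'\big)=\dim_\Lambda\CF\big(L,\tL'\big)<\infty$ (and similarly for $b_{\eps/2}(\varphi^r)$). Hence $b_\eps\big(L,L^k\big)$ and $b_\eps\big(\varphi^k\big)$ are bounded for $k\ge k_1$, and therefore --- adding the finitely many values with $k<k_1$ --- bounded for all $k$. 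Dividing $\log^+$ of these bounded sequences by $k$ and letting $k\to\infty$ gives $\hbr_\eps(\varphi;L,L')=0=\hbr_\eps(\varphi)$, and letting $\eps\searrow0$ yields $\hbr(\varphi;L,L')=0=\hbr(\varphi)$.

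The hard part will be making the $\gamma$-stability of the second step precise: unlike the Hofer estimate \eqref{eq:intersections}, which is of no use here since $\gamma$-smallness does not imply Hofer-smallness, one has to know that the bottleneck distance is controlled by the $\gamma$-norm (equivalently, in the relative case, by the Lagrangian spectral distance), which should follow from the isometry theorem together with the standard interleaving estimate for spectral invariants in the monotone setting of Section \ref{sec:def}. Everything else is the elementary observation that $\gamma$-almost periodicity reduces the values of $b_\eps\big(\varphi^k\big)$ for all $k$ to those on a fixed finite set of iterates.
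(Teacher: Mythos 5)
Your proposal is correct and follows essentially the same route as the paper's proof: reduce to finitely many residues $r\in\{0,\dots,N\}$ via the bounded-gap property of a $\gamma$-almost periodic map, then control $b_\eps(L,L^k)$ by $b_{\eps/2}(L,L^r)$ using the stability of the Floer barcode in the $\gamma$-norm, which is exactly \cite[Thm.\ B]{KS} and is recorded in the paper as inequality \eqref{eq:intersections2}. You correctly flag this $\gamma$-stability (rather than the Hofer estimate \eqref{eq:intersections}, which would not suffice) as the crux, and your handling of the degenerate case via \eqref{eq:b-eps3} and the passage to the entropy statement are both sound.
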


We prove this proposition, which is an easy consequence of a result
from \cite{KS},  in Section \ref{sec:AP-pfs} where we also show that
$\hbar(\varphi; L,L')=0$ and $\hbar(\varphi)=0$ for $\gamma$-a.i.'s
under a certain growth condition on the sequence $k_i$; see
Proposition \ref{prop:AP2}. We conjecture that this is true for all
$\gamma$-a.i.'s. 

A $C^0$-almost periodic Hamiltonian diffeomorphism or $C^0$-a.i.\ is
automatically $\gamma$-almost periodic or, respectively,
$\gamma$-a.i.\ when $M$ is symplectically aspherical, \cite{BHS}, and
also for some other classes of monotone symplectic manifolds $M$
including $\CP^n$, \cite{Sh:V}. However, we are not aware of any
example of Hamiltonian $C^0$-a.i.'s on a symplectically aspherical
manifold and hypothetically such maps do not exist. (See \cite{Po} for
the proof in the $C^1$-case and \cite{GG:AI} for a further
discussion.)

To date, the only known examples of Hamiltonian $\gamma$-a.i.'s or
$\gamma$-almost periodic Hamiltonian diffeomorphisms (beyond those
coming from torus actions) are \emph{Hamiltonian pseudo-rotations}
$\varphi$ of $\CP^n$ (see \cite{GG:PR, GG:AI} and also \cite{JS}),
although one can expect the same to be true for Hamiltonian
pseudo-rotations of many other manifolds. A Hamiltonian
pseudo-rotation is a Hamiltonian diffeomorphism with minimal possible
number of periodic points where minimality is interpreted in the
spirit of Arnold's conjecture. The actual definitions vary in general
(see \cite{CGG:CMD,GG:PR,Sh:HZ}), but for $M=\CP^n$ the requirement is
that $\varphi$ has exactly $n+1$ periodic points, which are then
necessarily the fixed points. Moreover, by \cite{GG:PR} and \cite{Sh:HZ}, all periodic points have one-dimensional local Floer homology, in particular, $b_\eps\big(\varphi^k\big)=n+1$ for any $\eps>0$. 

The simplest example of a Hamiltonian pseudo-rotation is a generic
element in a Hamiltonian circle or torus action with isolated fixed
points. However, in general, Hamiltonian pseudo-rotations can have
very interesting dynamics. For instance, Hamiltonian (aka,
area-preserving in this case) pseudo-rotations of $S^2=\CP^1$ with
exactly three invariant measures, which are then the two fixed points
and the area form, were constructed in \cite{AK}; see also
\cite{FK}. This construction was extended to symplectic toric
manifolds of any dimension in \cite{LRS}.

As an immediate consequence of Proposition
\ref{prop:AP}, we have the following.

\begin{Corollary}
  \label{cor:PR}
  Let $\varphi$ be a Hamiltonian pseudo-rotation of $\CP^n$ and let
  $L$ and $L'$ be Lagrangian submanifolds of $\CP^n$ as in Section
  \ref{sec:def}. Then for every $\eps>0$ the sequence
  $b_\eps\big(L,L^k\big)$ is bounded. In particular,
  $\hbar(\varphi; L,L')=0$.
\end{Corollary}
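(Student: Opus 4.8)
The plan is to deduce the corollary directly from Proposition~\ref{prop:AP}. Indeed, $\CP^n$ is a closed monotone symplectic manifold and $L,L'$ are Lagrangian submanifolds of the kind required in Section~\ref{sec:def}, so the only hypothesis of Proposition~\ref{prop:AP} that requires verification is that a Hamiltonian pseudo-rotation $\varphi$ of $\CP^n$ is $\gamma$-almost periodic.

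First I would recall why this holds. By the results of \cite{GG:PR} (see also \cite{JS}), the action spectrum of a Hamiltonian pseudo-rotation $\varphi$ of $\CP^n$ is extremely rigid: at the level of Floer theory $\varphi$ behaves essentially like an irrational rotation, and one shows that for every $\eps>0$ the set of iterates $k\in\N$ with $\gamma\big(\varphi^k\big)<\eps$ is quasi-arithmetic, i.e., the gaps between consecutive such $k$ are bounded (by a constant depending on $\eps$). By the definition recalled in Section~\ref{sec:AI+PR}, this is precisely the assertion that $\varphi$ is $\gamma$-almost periodic. (If one wished to avoid black-boxing this fact, the essential input is a recurrence/equidistribution property of the action spectrum under iteration, together with the observation that $\gamma\big(\varphi^k\big)$ becomes small whenever the iterated spectrum returns close to the original one; this is exactly the mechanism exploited in \cite{GG:PR}.)

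Granting this, Proposition~\ref{prop:AP} applies and shows that for every $\eps>0$ the sequence $b_\eps\big(L,L^k\big)$ is bounded, say $b_\eps\big(L,L^k\big)\le C_\eps$ for all $k\in\N$. Hence, for each fixed $\eps>0$,
$$
\hbr_\eps(\varphi;L,L')=\limsup_{k\to\infty}\frac{\log^+ b_\eps\big(L,L^k\big)}{k}\le \limsup_{k\to\infty}\frac{\log^+ C_\eps}{k}=0,
$$
and therefore $\hbr(\varphi;L,L')=\lim_{\eps\searrow 0}\hbr_\eps(\varphi;L,L')=0$ by Definition~\ref{def:hbr-rel12}. (This last step is of course already contained in the ``in particular'' clause of Proposition~\ref{prop:AP}.)

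The only genuine obstacle is the $\gamma$-almost periodicity of pseudo-rotations of $\CP^n$; this is not established in the present paper but imported from \cite{GG:PR}. Once it is in hand the corollary is immediate, which is why it is presented as a consequence of Proposition~\ref{prop:AP} rather than proved independently.
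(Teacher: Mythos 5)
Your proposal matches the paper's argument exactly: the corollary is stated as an immediate consequence of Proposition~\ref{prop:AP}, with the $\gamma$-almost periodicity of Hamiltonian pseudo-rotations of $\CP^n$ imported from \cite{GG:PR} (see also \cite{JS}), precisely as you describe. The concluding computation that a bounded sequence $b_\eps(L,L^k)$ forces $\hbar_\eps(\varphi;L,L')=0$ is likewise the intended (and trivial) final step.
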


We do not know if we can replace barcode entropy by sequential barcode
entropy in Proposition \ref{prop:AP} and Corollary \ref{cor:PR}. Note
also that the absolute counterpart of Corollary \ref{cor:PR} is
obvious in contrast with Proposition \ref{prop:AP}:
$b_\eps\big(\varphi^k\big)=n+1$ for any $\eps>0$, and hence
$\hhbar(\varphi)=\hbar(\varphi)=0$, for a pseudo-rotation $\varphi$ of
$\CP^n$.

We conjecture that $\htop(\varphi)=0$ for any Hamiltonian
pseudo-rotation of $\CP^n$, and Corollary \ref{cor:PR} provides some
indirect evidence supporting this conjecture. In dimension two, the
conjecture follows immediately from, e.g., the results in \cite{Ka}
asserting that any area-preserving positive-entropy
$C^{1+\alpha}$-diffeomorphism of a compact surface must have a
horseshoe, and hence has infinitely many periodic points. Furthermore,
Hamiltonian pseudo-rotations $\varphi$ of $D^2$ or $\CP^n$ satisfying
a certain additional condition on the rotation number or the rotation
vector are known to be $C^0$-a.i.'s; see \cite{Br} and also \cite{A-Z}
for $D^2$ and \cite{GG:PR} for $\CP^n$ and \cite{JS} for Anosov-Katok
pseudo-rotations. Thus, in this case, $\htop(\varphi)=0$ by the
observation from \cite{A-Z} mentioned above.

\section{Proofs and refinements}
\label{sec:proofs}

In this section we prove Theorem \ref{thm:growth}, Proposition
\ref{prop:gamma} and also refine and prove Proposition
\ref{prop:AP}. Along the way we discuss some other ways to measure
barcode growth. The proof of Theorem \ref{thm:growth} hinges on a
construction from \cite{CGG:Entropy}, which we call a Lagrangian
tomograph and describe next.

\subsection{Lagrangian tomograph}
\label{sec:LT}
Let $L$ be a closed Lagrangian submanifold of a symplectic manifold
$M^{2n}$. A \emph{Lagrangian tomograph} is a map
$\Psi\colon B\times L\to M$, where $B=B^d$ is a ball of possibly very
large dimension $d$, which satisfies the following properties:

  \begin{itemize}

  \item[\reflb{LT1}{\rm{(i)}}] The map $\Psi$ is a submersion onto its
    image, the maps $\Psi_s:=\Psi|_{\{s\}\times L}$ are smooth
    embeddings for all $s\in B$ and $\Psi_0=\iota_L$ where
    $\iota_L \colon L \to M$ is the inclusion map;
    
  \item[\reflb{LT2}{\rm{(ii)}}] The images
    $L_s=\Psi\big(\{s\}\times L\big)$ are Lagrangian submanifolds of
    $M$ Hamiltonian isotopic to $L$.
\end{itemize}

Thus a Lagrangian tomograph is a family of Lagrangian submanifolds
$L_s$ which are parametrized by a ball $B$ and meet some additional
requirements. We call $d=\dim B$ the \emph{dimension of the
  tomograph}.  Note that we have $L_0=L$ by \ref{LT1}.  A Lagrangian
tomograph always exists for any closed Lagrangian submanifold $L$. In
fact, a Lagrangian tomograph of dimension $d$ exists if and only if
$L$ admits an immersion into $\R^d$; see \cite[Lemma
5.6]{CGG:Entropy}. We will need the following lemma.

\begin{Lemma}
\label{lemma:lip}
For some $C_{\hn}>0$ depending only on the tomograph, we have  
 \begin{equation}
      \label{eq:LT-Hofer}
    d_{\hn}(L_0,L_s)\leq C_{\hn}\|s\|. 
      \end{equation}
\end{Lemma}

\begin{proof}
  Since $B$ is compact it suffices to show that \eqref{eq:LT-Hofer}
  holds when $\| s\|$ is small. Fix a Weinstein neighborhood of $L=L_0$. Then, near $s=0$, each Lagrangian $L_s$ is
  given by the graph of some exact form $\alpha_s$. Let
  $\alpha_s = df_s$ be a smooth family of primitives; see
  \cite{ER}. (Note that, since $\Psi$ is smooth, the family $\alpha_s$
  is smooth in $s\in B$.) The claim \eqref{eq:LT-Hofer} follows from
  the following two inequalities
$$
d_{\hn}(L_0,L_s) \leq \max_L f_s - \min_L f_s \leq C_{\hn} \|s \|
$$ 
for some constant $C_{\hn}>0$. Here, in the first inequality one can
take $\pi^* f_s \colon T^*L \to \R$, where $\pi \colon T^*L \to L$ is the projection map, as the generating Hamiltonian. In fact,
the equality holds when $\| s\|$ is sufficiently small, \cite{Mi}, but
we do not need this fact. The second inequality follows from the
smoothness of the family $f_s$.
\end{proof}

Next, let $\tilde{L}$ be a closed $n$-dimensional submanifold of
$M$. Set
$$
N(s):=|L_s\cap \tilde{L}|\in [0,\,\infty].
$$
Since $\Psi$ is a submersion, $\Psi_s\pitchfork \tilde{L}$ for almost
all $s\in B$. Hence $N(s)<\infty$ almost everywhere and $N$ is an
integrable function on $B$.

Fix an auxiliary Riemannian metric on $M$ and let $ds$ be a smooth
measure on $B$, e.g., the standard Lebesgue measure. The key to the
proof of Theorem \ref{thm:growth} is the following observation.

\begin{Lemma}[Crofton's inequality; Lemma 5.3 in \cite{CGG:Entropy}]
  \label{lemma:Crofton}
  We have
  \[
  \int_B N(s)\,ds\leq C_{\Cr} \cdot \vol(\tilde{L}),
  \]
  where the constant $C_{\Cr}$ depends only on $ds$, $\Psi$ and the
  metric on $M$, but not on $\tilde{L}$.
\end{Lemma}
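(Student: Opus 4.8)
The plan is to reduce the statement to the classical Crofton formula from integral geometry by localizing near the image of $\Psi$. First I would exploit property \ref{LT1}: since $\Psi\colon B\times L\to M$ is a submersion onto its image $U:=\Psi(B\times L)$, the function $N(s)=|L_s\cap\tilde L|$ only ``sees'' the part $\tilde L\cap U$ of $\tilde L$, so we may replace $\tilde L$ by $\tilde L\cap U$ and assume everything happens inside the open set $U$. Because $\Psi$ is a submersion of the $(d+n)$-dimensional manifold $B\times L$ onto $U\subseteq M^{2n}$, we may cover a relatively compact piece of $U$ by finitely many coordinate charts in which $\Psi$ becomes a linear projection, and correspondingly stratify the parameter ball $B$; by subadditivity of both sides it suffices to prove the estimate chart by chart, i.e.\ we are reduced to the model situation of an affine family of $n$-planes (or graphs over them) in $\R^{2n}$ meeting a fixed $n$-dimensional submanifold.

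The main computation is then the coarea/Crofton argument. Consider the incidence set
$$
Z:=\big\{(s,x)\in B\times \tilde L \mid x\in L_s\big\}\subset B\times\tilde L .
$$
By \ref{LT1} the map $\Psi$ being a submersion implies that the projection $Z\to\tilde L$ is a submersion with fibers of dimension $d-n$, so $Z$ is a smooth manifold and
$$
\int_B N(s)\,ds=\int_B |Z\cap(\{s\}\times\tilde L)|\,ds=\int_Z \mathbf 1 \, = \int_{\tilde L}\Big(\int_{Z_x} J(s,x)^{-1}\,d\mu_x\Big)\,d\vol_{\tilde L}(x),
$$
where $Z_x$ is the fiber over $x$ and $J$ is the Jacobian factor produced by the coarea formula applied to $Z\to\tilde L$. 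The point is that $J(s,x)$ is bounded below away from zero on the relevant compact piece: it is controlled entirely by $\Psi$ and the metric on $M$ (it measures the transversality of $L_s$ to $\tilde L$, but crucially only through the ``velocity'' directions $\partial\Psi/\partial s$, which are properties of the tomograph, not of $\tilde L$), and the inner integral $\int_{Z_x}J^{-1}d\mu_x$ is therefore bounded by a constant $C_{\Cr}$ depending only on $ds$, $\Psi$ and the metric. Pulling that constant out yields $\int_B N(s)\,ds\le C_{\Cr}\,\vol(\tilde L)$.

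The step I expect to be the genuine obstacle is making the coarea argument uniform: a priori the Jacobian $J(s,x)$ degenerates exactly where $L_s$ fails to be transverse to $\tilde L$ (these are the $s$ for which $N(s)$ is not locally constant), and one must check that the resulting singular integral $\int_{Z_x}J^{-1}$ still converges and stays bounded uniformly in $x$ and uniformly over $\tilde L$. This is precisely where the submersion hypothesis \ref{LT1} does the work: because $\Psi$ is a submersion, the family $\{L_s\}$ sweeps out a neighborhood of each of its points ``at positive speed'' in enough independent directions, so that the set of bad $s$ has the expected codimension and $J^{-1}$ is integrable over each fiber with a bound depending only on $\Psi$ and the metric. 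Once this uniform integrability is established, the rest is bookkeeping with finitely many charts and a partition of unity. (I would cite \cite[Lemma 5.3]{CGG:Entropy} for the details of this localization, since the statement is quoted verbatim from there.)
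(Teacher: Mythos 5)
Your approach---localize, form the incidence manifold $Z$, and run the coarea formula against the two projections $\pi_1\colon Z\to B$ and $\pi_2\colon Z\to\tilde L$---is the standard Crofton argument and is indeed how \cite[Lemma 5.3]{CGG:Entropy} is proved; the present paper gives no proof, it simply cites that lemma. Two steps in your write-up need repair, however. The middle equality $\int_B N(s)\,ds=\int_Z \mathbf 1$ is not an equality: the area formula for the $d$-to-$d$ map $\pi_1\colon Z\to B$ gives $\int_B\#\pi_1^{-1}(s)\,ds=\int_Z J_{\pi_1}\,d\vol_Z$, and since $\pi_1$ is the restriction of the $1$-Lipschitz product projection $B\times\tilde L\to B$ one only has $J_{\pi_1}\le 1$. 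The correct (and sufficient) statement is $\int_B N(s)\,ds\le\vol(Z)$.

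The ``obstacle'' you flag at the end is not actually there, and it stems from mixing up the two Jacobians. The Jacobian that vanishes exactly where $L_s$ is tangent to $\tilde L$ is $J_{\pi_1}$---and that vanishing works in our favor, since all we used is $J_{\pi_1}\le 1$. The coarea factor $J$ of $\pi_2\colon Z\to\tilde L$, by contrast, never degenerates: identifying $Z$ with $e^{-1}(\tilde L)\subset B\times L$ for $e=\Psi$, one has $T_{(s,q)}Z=(De)^{-1}(T_x\tilde L)$, and the relevant restriction of $De$ has every singular value bounded below by the smallest singular value of $De$ on $(\ker De)^{\perp}$; hence $J$ is bounded below on compact sets by a positive constant determined by $\Psi$ and the metrics alone, with no dependence on $\tilde L$ and no transversality issue. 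There is therefore no singular integral to control. What you do leave implicit, and should state, is that the fibers $Z_x=e^{-1}(x)$ have uniformly bounded $(d-n)$-dimensional volume over the relevant compact piece---again a property of $\Psi$ only. With those two corrections the chain of inequalities closes and yields the stated constant $C_{\Cr}$.
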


Of course, the lemma holds without the requirement that the
submanifolds $L_s$ are Lagrangian -- Condition \ref{LT1} is
sufficient. However, Condition \ref{LT2} is essential for the rest of
the proof and hence we included it in the definition of a Lagrangian
tomograph.

\subsection{Proof of Theorem \ref{thm:growth}}
\label{sec:pf-thm}
As are many arguments of this type, the proof is ultimately based on
Yomdin's theorem, \cite{Yo}, and quite similar to the proof of
\cite[Thm.\ 5.1]{CGG:Entropy}.  To prove the theorem, it suffices to
show that
\begin{equation}
  \labell{eq:htop-hbar-eps}
  \htop(\varphi)\geq 
  \hhbr_{\{\eps_k\}}(\varphi; L,L')
\end{equation}
for every subexponential sequence $\{\eps_k\}$ with
$\hhbr_{\{\eps_k\}}(\varphi; L,L') >0$. We will further assume that
$b_{\eps_k}\big(L_0,L^k\big) >0$ for all $\eps_k \in \{\eps_k\}$. This
can be always achieved without changing the growth rate via passing to
a subsequence.

Set $L^k:=\varphi^k(L')$. By Lemma \ref{lemma:Crofton}, we have a
sequence of integrable functions $N_k$ on $B$ such that,
$$
\int_B N_k(s)\,ds\leq C_{\Cr} \cdot \vol\big(L^k\big),
$$
where the constant $C_{\Cr}$ is independent of $k$.

For any sequence of balls $B_k\subset B$ of radius $\delta_k$ centered
at the origin, we have the following chain of inequalities
$$
C_{\Cr}\vol\big(L^k\big)\geq \int_B N_k\,ds \geq \int_{B_k}N_k\,ds
\geq \int_{B_k} b_{\eps_k/2}\big(L_s,L^k\big)\,ds,
$$
where in the last inequality we used \eqref{eq:intersections-b}. Let
$C_{\hn}$ be the constant from \eqref{eq:LT-Hofer}. By
\eqref{eq:intersections},
$$
b_{\eps_k/2}\big(L_s,L^k\big)\geq b_{\eta_k}\big(L_0,L^k\big)
$$
with $\eta_k=\eps_k/2+2C_{\hn}\delta_k$. Then, setting
$\delta_k=\eps_k/4C_{\hn}$, we obtain the inequality
$$
b_{\eps_k/2}\big(L_s,L^k\big)\geq b_{\eps_k}\big(L_0,L^k\big)
$$
as long as $s\in B_k$.  Therefore,
$$
\int_{B_k} b_{\eps_k/2}\big(L_s,L^k\big)\,ds \geq \int_{B_k}
b_{\eps_k}\big(L_0,L^k\big)\,ds=\vol\big(B_k\big)
b_{\eps_k}\big(L_0,L^k\big),
$$
where we took $ds$ to be the Lebesgue measure. To summarize,
\begin{equation}
  \label{eq:vol-b}
  \vol\big(L^k\big)\geq C_{\Cr}^{-1}\vol\big(B_k\big)
  b_{\eps_k}\big(L_0,L^k\big).
\end{equation}
Taking $\log^+$ of both sides and dividing by $k$, we have
\begin{equation}
  \label{eq:eps-k-ineq}
  \frac{\log^+\vol\big(L^k\big)}{k}\geq
  \frac{\log^+b_{\eps_k}\big(L_0,L^k\big)}{k}
  + d\cdot\frac{\log^+\eps_k}{k}+O(1/k),
\end{equation}
where $d=\dim B$. Due to the condition that $\{\eps_k\}$ is
subexponential, i.e., \eqref{eq:subexp2}, the second term on the right
goes to zero as $k\to\infty$. Thus, passing to the limit, we have
$$
\limsup_{k\to\infty}\frac{\log^+\vol\big(L^k\big)}{k} \geq
\hbar_{\{\eps_k\}}(\varphi; L,L').
$$
By Yomdin's theorem, \cite{Yo}, the left hand side is bounded from
above by $\htop(\varphi)$, and \eqref{eq:htop-hbar-eps} follows, which
concludes the proof of the theorem.  \qed

\begin{Remark}
  \label{rmk:threshold}
  This argument actually tells us a little bit more than Theorem
  \ref{thm:growth}. Focusing on the case of absolute entropy for the
  sake of simplicity, observe that \eqref{eq:vol-b} holds whenever
  $\eps_k$ is sufficiently small. The threshold for $\eps_k$ is
  determined by the tomograph. Then, by \eqref{eq:eps-k-ineq}, for any
  sequence $\{\eps_k\}$ which eventually becomes small enough, for
  instance whenever $\eps_k \to 0$, subexponential or not, we have
$$
\htop(\varphi)+d\cdot\limsup \frac{|\log^+\eps_k|}{k} \geq
\hbar_{\{\eps_k\}}(\varphi).
$$
\end{Remark}

\subsection{The shortest bar and total persistence}
\label{sec:applications}

In this section we briefly discuss some other ways to measure the size
of a barcode and relevant notions of barcode entropy. The first one is
centered around the shortest bar.

\begin{Proposition}
  \label{prop:htop-vs-p}
  Let $\varphi$ be a strongly non-degenerate Hamiltonian
  diffeomorphism $\varphi\colon M\to M$, where $M$ is closed and
  weakly monotone; cf.\ Section \ref{sec:def}. Denote by
  $\beta^{\min}_k$ the shortest bar for $\varphi^k$ and by $p(k)$
  the number of $k$-periodic points of $\varphi$. Then
  \begin{equation}
      \label{eq: htop-per2}
      \htop(\varphi)\geq
      \limsup \frac{\log^+p(k) - d\cdot |\log^+\beta^{\min}_k|}{k},
  \end{equation}
  where we can take as $d$ the minimal dimension of the Euclidean
  space which $M$ can be immersed into.
\end{Proposition}

To put this result in perspective, recall that when $\dim M>2$ there
is no clear-cut connection between the growth of $p(k)$ and
topological entropy in either direction; see \cite{Kal}.  Proposition
\ref{prop:htop-vs-p} resolves the problem to a certain extent by
providing a lower bound for topological entropy in terms of the
exponential growth rate of $p(k)$, but with a correction term coming
from the decay of $\beta^{\min}_k$. Furthermore, even in dimension
two, $p(k)$ can grow arbitrarily fast, even when $\varphi$ is strongly
non-degenerate; see \cite[Thm.\ 1.2]{As}. Then, by \eqref{eq:
  htop-per2}, $\beta^{\min}_k$ must go to zero super-exponentially and
\eqref{eq: htop-per2} still provides some information.
    
\begin{proof}
Observe that 
$$
p(k)=2b_{\eps}\big(\varphi^k\big)-\dim M
$$
for all $\eps< \beta^{\min}_k$. Then, as in Remark
\ref{rmk:threshold}, it is not hard to see from \eqref{eq:eps-k-ineq}
with $\eps_k=\min\{ c, \beta^{\min}_k\}$
that
\begin{equation}
  \label{eq:htop-per1}
  k\htop(\varphi)+d\cdot |\log^+ \min\{ c, 
  \beta^{\min}_k\}|
  \geq \log^+ p(k)+ o(k),
\end{equation}
where $c>0$ is the threshold mentioned in the remark.  On the other
hand, we have
$\beta^{\min}_k \leq \|\varphi^k\|_{\hn} \leq k \|\varphi\|_{\hn} $;
see \cite{Us2}.  It follows that
\begin{equation}
  \label{eq:htop-per1.5}
  |\log^+ \min\{ c,  \beta^{\min}_k\}|
  =  |\log^+  \beta^{\min}_k| + O(\log k). 
\end{equation}
Now, \eqref{eq: htop-per2} follows from \eqref{eq:htop-per1} and
\eqref{eq:htop-per1.5}.

\end{proof}

Another way to measure the size of a barcode is by looking at the
total persistence, i.e., the sum of the finite bar lengths taken to
some power $\alpha >0 $.  To be more specific, set
$$
\sigma_\alpha(\varphi)=\sum \beta_i(\varphi)^\alpha \in [0,\infty],
$$
where $\alpha > 0 $ is fixed and the sum is over all finite bars in
$\CB(\varphi)$. This is a Floer theoretic variant of the total
persistence; see, e.g., \cite{CSEHM,ST} and references therein.
Clearly, the above sum is finite when $\varphi$ is strongly
non-degenerate and this is the case we will focus on here. Then we
introduce a family of total persistence barcode entropies
$$
\hbar(\alpha, \varphi) := \limsup_{k \to \infty} \frac{\log^+
  \sigma_\alpha \big(\varphi^k\big)}{k}\in [0,\,\infty].
$$
In the strongly non-degenerate case, similarly to above, one can
regard $\hbar(\alpha, \varphi)$ as the exponential growth rate of
periodic points $p(k)$ counted with certain weights coming from the
barcode.

\begin{Proposition} 
  \label{prop:total_bar}
  Let $\varphi$ be a strongly non-degenerate Hamiltonian
  diffeomorphism of a closed and weakly monotone symplectic manifold
  $M$. Then, for $\hbar(\alpha, \varphi)$ defined as above:
  \begin{itemize}
  \item[\rm{(i)}] The function $\alpha\mapsto \hbar(\alpha, \varphi)$
    is (non-strictly) decreasing.

\item[\rm{(ii)}] We have
  $\hbar(\alpha, \varphi)\geq \hbar_\eps(\varphi)$ for all
  $\eps>0$. As a consequence,
  $\hbar(\alpha, \varphi)\geq \hbar(\varphi)$.

\item[\rm{(iii)}] For $\alpha\geq d$, where $d$ is the dimension of a
  tomograph,
  $$
  \hbar(\alpha, \varphi)\leq \htop(\varphi)<\infty.
  $$
\end{itemize}
\end{Proposition}

In the same vein, a variant of total persistence barcode entropy can
be defined for a Lagrangian or a pair of Lagrangians, and a similar
result holds in this setting.

\begin{Corollary}
  Assume that $M$ is a closed surface. Then, for $\alpha\geq 3$ and
  any strongly non-degenerate Hamiltonian diffeomorphism
  $\varphi\colon M\to M$, we have
  $$
  \hbar(\alpha,\varphi)=\hbar(\varphi)=\htop(\varphi).
  $$
\end{Corollary}

\begin{proof}
  Any closed surface can be immersed into $\R^3$ and even embedded
  when $M$ is orientable. Hence, when $M$ is such a surface, there
  exists a Lagrangian tomograph of dimension $d=3$; see
  \cite{CGG:Entropy}. Now, for $\alpha\geq 3$, in dimension two we
  have the chain of (in)equalities
  $$
  \hbar(\varphi)\leq \hbar(\alpha,\varphi)\leq
  \htop(\varphi)=\hbar(\varphi),
  $$
  where the first two inequalities follow from the proposition and in
  the last equality we use \cite[Thm.\ C]{CGG:Entropy}. Therefore,
  all three invariants are equal.
\end{proof}

\begin{Remark}
  We do not know if it can happen that
  $\hbar(\alpha, \varphi) =\infty$ for some $0 < \alpha<d$, e.g., for
  $\alpha=1$ which corresponds to the total bar length growth. But if
  it can, the infimum
$$
\overline{\alpha}_{\varphi}:= \inf
\{\alpha >0 \mid \hbar(\alpha, \varphi) <\infty\}
$$
would be a new Hausdorff dimension--type invariant of $\varphi$
associated with the barcodes $\CB\big(\varphi^k\big)$.
\end{Remark}

\begin{proof}[Proof of Proposition \ref{prop:total_bar}]
  We start by introducing a ``truncated'' version of the invariant
  $\hbar(\alpha, \varphi)$. For $b > 0$, set
$$
\sigma_{\alpha}^b(\varphi^k)=\sum \min\big\{b,
\beta_i\big(\varphi^k\big)\big\}^{\alpha}
$$
and let $\hbar^b(\alpha, \varphi)$ be the exponential growth rate (in
$k$) of $\sigma_{\alpha}^b(\varphi^k)$. More precisely,
$$
\hbar^b(\alpha, \varphi) := \limsup_{k \to \infty} \frac{\log^+
  \sigma_\alpha ^b(\varphi^k)}{k}\in [0,\,\infty].
$$
Clearly,
$$
\sigma_{\alpha}^b(\varphi)\leq\sigma_\alpha(\varphi),
$$
and hence
$$
\hbar^b(\alpha, \varphi)\leq \hbar(\alpha, \varphi).
$$
We claim that in fact
\begin{equation}
\label{eq:truncated}
\hbar^b(\alpha, \varphi) =\hbar (\alpha, \varphi) 
\end{equation}
for all $b>0$ and $\alpha >0$. Deferring the proof of
\eqref{eq:truncated} to the end, let us first prove the proposition.

For Part (i), we set $b=1$ and observe that
$\sigma_\alpha^1(\varphi^k)$ is decreasing in $\alpha >0 $. It follows
that the growth rate $\hbar^1(\alpha, \varphi)$, and hence
$\hbar(\alpha, \varphi)$ by \eqref{eq:truncated}, is also a decreasing
function of $\alpha>0$.

Furthermore, 
$$
\eps^\alpha b_\eps(\varphi)\leq \sigma_\alpha(\varphi)
$$
for all $\varphi$ and $\eps>0$. Applying this inequality to
$\varphi^k$, taking $\log^+$ and passing to the limit we obtain Part
(ii). (This argument is independent of \eqref{eq:truncated}.)

Next, let us focus on Part (iii). By Part (i), we may assume that
$\alpha=d$. Let $L^k$ be the graph of $\varphi^k$ and $L_0$ the
diagonal in $M\times M$. Thus for any $\eps>0$,
$b_\eps(\varphi^k)=b_\eps(L_0,L^k)$.

Fix a tomograph $L_s$ about $L_0$.
Then as in Section \ref{sec:pf-thm}
we have
$$
N_k(s) \geq b\big(L_s, L^k\big) \geq b_{C_H \|s\|}\big(L_s, L^k\big)
\geq b_{3C_H \|s\|}\big(L_0, L^k\big)
= b_{3C_H \|s\|}\big(\varphi^k\big)
$$
whenever $L_s \pitchfork L^k$; see
\eqref{eq:intersections} and Lemma \ref{lemma:lip}. Integrating both sides and using Lemma \ref{lemma:Crofton}, we obtain
\begin{equation}
\label{eq:total11}
C_{\Cr} \vol\big(L^k\big) \geq \int_B N_k(s)\, ds
\geq \int_B b_{3C_H \|s\|}\big(\varphi^k\big)\, ds. 
\end{equation}
A change of variables with $r=3C_H \| s \|$ yields 
\begin{equation}
\label{eq:total12}
\int_B b_{3C_H \|s\|}\big(\varphi^k\big)\, ds
= C \int_0^{r_0} b_r \big(\varphi^k\big) r^{d-1} \, dr
\end{equation}
for some $C>0$ and $r_0 >0$ independent of $k$. On the other hand, the
truncated sum $\sigma_d^{r_0}\big(\varphi^k\big)$ can also be written
as
\begin{equation}
\label{eq:total13}
\sigma_d^{r_0}\big(\varphi^k\big)
= d \cdot \sum \int_0^{r_0} \mathbbm{1}_{[0,
  \beta_i(\varphi^k))}(r)  r^{d-1} \, dr
= d\cdot \int_0^{r_0} b_r \big(\varphi^k\big) r^{d-1} \, dr,
\end{equation}
where $\mathbbm{1}_{[a,b)}(r)$ is the characteristic function of
$[a,b)$.  Here the second equality is a consequence of the identity
$$\sum \mathbbm{1}_{[0, \beta_i(\varphi^k))} (r) = b_{r}(\varphi^k).
$$
We combine \eqref{eq:total11}, \eqref{eq:total12} and
\eqref{eq:total13} to infer as in the proof of Theorem
\ref{thm:growth} that
$$
\htop(\varphi) \geq \hbar^{r_0}(d, \varphi) = \hbar (d, \varphi).
$$

It remains to prove the claim \eqref{eq:truncated}.  As in
\eqref{eq:htop-per1.5}, the claim relies on the linear upper bound
$\beta_i\big(\varphi^k\big) \leq k \|\varphi\|_{\hn} $; see
\cite{Us2}.  More precisely, we have
\begin{equation}
\label{eq:linear}
\sum_{\beta_i(\varphi^k) >b} b^\alpha
\leq \sum_{\beta_i\big(\varphi^k\big)  >b}
\beta_i\big(\varphi^k\big)^\alpha \leq
\sum_{\beta_i(\varphi^k) >b} (k \|\varphi\|_{\hn}) ^\alpha.
\end{equation}
Here all three terms and, in particular, the first two have the same
exponential growth rate. Now, \eqref{eq:truncated} is a consequence of
\eqref{eq:linear} and the general fact that
\begin{equation}
\label{eq:max}
\limsup \frac{\log^+ (p_k +q_k)}{k}
= \max \left\{ \limsup \frac{\log^+ p_k }{k} , \,
  \limsup \frac{\log^+ q_k }{k}  \right\},
\end{equation}
which holds for any real sequences $p_k\geq 0$ and $q_k \geq
0$. Namely, set
\[
p_k = \sum_{\beta_i(\varphi^k) >b} b^\alpha
\quad
\text{and}
\quad
p_k'= \sum_{\beta_i(\varphi^k) >b} \beta_i\big(\varphi^k\big)^\alpha,
\]
and also
\[
q_k= \sum_{\beta_i(\varphi^k) \leq b}\beta_i\big(\varphi^k\big)^\alpha.
\]
Then
\[
  \sigma_{\alpha}^b\big(\varphi^k\big)= p_k +q_k
\quad
\text{and}
\quad
  \sigma_{\alpha} \big(\varphi^k\big)= p_k' +q_k.
\]
By \eqref{eq:linear}, $p_k$ and $p_k'$ have the same exponential
growth rate.  Then it follows from \eqref{eq:max} that
$\sigma_{\alpha}^b\big(\varphi^k\big)$ and
$\sigma_{\alpha} \big(\varphi^k\big)$ have the same exponential growth
rate too.
\end{proof}

\begin{Remark}  
  Just as in Theorem \ref{thm:growth}, a similar construction can also
  be carried out in the relative setting for a pair of Lagrangians and
  an analogue of Proposition \ref{prop:total_bar} also holds in this
  case, with the same proof.
\end{Remark}  

\subsection{Proof of Proposition \ref{prop:gamma}}  
  \label{sec:pf-prop-gamma}
  
  For some $N\in \N$, $\varphi$ has more than $\dim \H_*(M)$
  hyperbolic $N$-periodic points. We denote the set of such points by
  $\mathcal{K}$. Thus $|\mathcal{K}|>\dim \H_*(M)$ and clearly
  $\mathcal{K}$ is a locally maximal hyperbolic set.  Furthermore,
  every point in $\mathcal{K}$ is also $\ell N$-periodic for all
  $\ell\in \N$. Then, arguing as in the proof of \cite[Thm.\
  B]{CGG:Entropy} and using \cite[Prop.\ 3.8 and 6.2]{CGG:Entropy}, we
  conclude that for a sufficiently small $\eps>0$ and any
  $\ell\in \N$,
  $$
  b_\eps\big(\varphi^{\ell N}\big)>\dim \H_*(M),
  $$
  and hence $\varphi^{\ell N}$ has a finite bar
  of length greater than $\eps>0$.

  Also recall that as is proved in \cite[Thm.\ A]{KS}, for any
  $\varphi$,
  \begin{equation}
  \label{eq:KS}
    \beta_{\max}(\varphi)\leq \gamma(\varphi),
  \end{equation}
  where the left-hand side is the \emph{boundary depth}, i.e., the
  longest finite bar in the barcode of $\varphi$. Thus, for a
  sufficiently small $\eps>0$,
  \begin{equation}
    \label{eq:eps-beta-gamma}
  \eps<\beta_{\max}\big(\varphi^{\ell N}\big)\leq \gamma
  \big(\varphi^{\ell N}\big).
\end{equation}

Next, arguing by contradiction, assume that there exists a sequence
$k_i\to\infty$ such that $\gamma\big(\varphi^{k_i}\big)\to 0$. We
claim that when $k_i<k_j$ are large enough, the difference $k_j-k_i$
is not divisible by $N$. This will imply the proposition; for then the
sequence $k_i$ would contain an infinite subsequence with the
difference between any two terms not divisible by $N$. This is
impossible because there are only finitely many residues modulo $N$.
  
  To prove the claim, assume the contrary:
  $$
  k_j-k_i=\ell N.
  $$
  Then by the triangle inequality for $\gamma$, we have
  $$
  \gamma\big(\varphi^{\ell N}\big) \leq \gamma\big(\varphi^{k_j}\big)
  + \gamma\big(\varphi^{-k_i}\big).
  $$
  Here, the right hand side becomes arbitrarily small when $k_i$ and
  $k_j$ are large, but the left hand side is bounded from below by
  $\eps>0$ by \eqref{eq:eps-beta-gamma}. This contradiction concludes
  the proof of the proposition. \qed
  
  \subsection{Proof and a refinement of Proposition \ref{prop:AP}}
  \label{sec:AP-pfs}
  Although we do not have any examples of $\gamma$-a.i.'s which are
  not pseudo-rotations, and hence $\gamma$-almost periodic when
  $M=\CP^n$, it is interesting to see that the zero-entropy part of
  the statement of the proposition holds under a less restrictive
  condition than $\gamma$-almost periodicity.

  To state the result, let us assume that $\varphi$ is a
  $\gamma$-a.i.\ and denote by $k_i=k_i(\eps)$ a strictly increasing
  sequence of integers such that
  $\gamma\big(\varphi^{k_i}\big)<\eps$. Furthermore, assume that there
  exists $\alpha<1$ independent of $\eps$ and such that
\begin{equation}
  \label{eq:gamma-gap}
    k_{i+1}-k_{i}\leq \alpha k_i,
  \end{equation}
  when $k_i$ is sufficiently large (depending on $\eps$). For
  instance, all $\gamma$-almost periodic Hamiltonian diffeomorphisms
  meet this requirement.

\begin{Proposition}
  \label{prop:AP2}
  Let $\varphi$ be a $\gamma$-a.i.\ satisfying
  \eqref{eq:gamma-gap}. Then $\hbar(\varphi; L,L')=0$ and
  $\hbar(\varphi)=0$.
\end{Proposition}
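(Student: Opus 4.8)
The plan is to establish the single scaling inequality
\[
\hbr_{\eps_0}(\varphi;L,L')\le \alpha\,\hbr(\varphi;L,L')\qquad\text{for every }\eps_0>0 ,
\]
and then to extract the conclusion from it: since $\hbr(\varphi;L,L')\le\htop(\varphi)<\infty$ by Theorem \ref{thm:growth} (equivalently \cite[Thm.\ 5.1]{CGG:Entropy}) and $\alpha<1$, taking the supremum over $\eps_0$ yields $\hbr(\varphi;L,L')\le\alpha\,\hbr(\varphi;L,L')$, whence $\hbr(\varphi;L,L')=0$. The assertion $\hbr(\varphi)=0$ then follows by running the same argument with $L^k$ replaced by $\varphi^k$, using the decomposition $\varphi^k=\varphi^{k_i}\circ\varphi^{j}$ in place of $L^k=\varphi^{k_i}(L^j)$ and observing that the relevant $\gamma$-distance between $\varphi^k$ and $\varphi^{j}$ equals $\gamma\big(\varphi^{-j}\varphi^{k_i}\varphi^{j}\big)=\gamma\big(\varphi^{k_i}\big)$ by conjugation invariance of $\gamma$.

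To prove the scaling inequality I would fix $\eps_0>0$ and let $\kappa\ge 1$ be the constant in the $\gamma$-version of the continuity estimate recalled in Section \ref{sec:prelim} --- namely $b_{\eps-\kappa\delta}(L,L'')\ge b_\eps(L,L')$ whenever the spectral ($\gamma$-) distance between the Hamiltonian isotopic Lagrangians $L'$ and $L''$ is at most $\delta$, and likewise for $b_\eps(\varphi)$. This $\gamma$-refinement of \cite[Thm.\ A]{KS} is exactly what makes the hypothesis usable, since $\gamma\big(\varphi^{k_i}\big)$ being small does \emph{not} make $\big\|\varphi^{k_i}\big\|_{\hn}$ small, so the Hofer-norm continuity of Section \ref{sec:prelim} would be of no help here. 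I would then choose $\eps\in(0,\eps_0/\kappa)$, set $\eps_1:=\eps_0-\kappa\eps\in(0,\eps_0)$, and take the increasing sequence $k_i=k_i(\eps)$ with $\gamma\big(\varphi^{k_i}\big)<\eps$. For each large $k$, picking $i$ with $k_i\le k<k_{i+1}$ and setting $j:=k-k_i$, condition \eqref{eq:gamma-gap} gives $0\le j<k_{i+1}-k_i\le\alpha k_i\le\alpha k$ once $k$ (hence $k_i$) is large. Since $L^k=\varphi^{k_i}(L^j)$ lies at $\gamma$-distance at most $\gamma\big(\varphi^{k_i}\big)<\eps$ from $L^j$, the continuity estimate yields
\[
b_{\eps_0}\big(L,L^k\big)\le b_{\eps_1}\big(L,L^{j}\big)\le f(\alpha k),\qquad f(m):=\sup_{0\le l\le m}\log^+ b_{\eps_1}\big(L,L^{l}\big),
\]
where $f$ is non-decreasing and, directly from the definition of $\hbr_{\eps_1}$, satisfies $\limsup_{m\to\infty}f(m)/m\le\hbr_{\eps_1}(\varphi;L,L')$. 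Dividing by $k$, letting $k\to\infty$, and using $j(k)\le\alpha k$, I arrive at
\[
\hbr_{\eps_0}(\varphi;L,L')=\limsup_{k\to\infty}\frac{\log^+ b_{\eps_0}(L,L^k)}{k}\le\limsup_{k\to\infty}\frac{f(\alpha k)}{k}\le\alpha\,\hbr_{\eps_1}(\varphi;L,L')\le\alpha\,\hbr(\varphi;L,L') ,
\]
which is the desired inequality.

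The step I expect to be the main obstacle --- or at any rate the one requiring the most care --- is the $\gamma$-continuity of $b_\eps$ and the attendant bookkeeping of the threshold shift; this is handled by choosing $\eps$ small relative to $\eps_0$ so that $\eps_1=\eps_0-\kappa\eps>0$, and it is the only point at which the sharp estimate of \cite{KS} is invoked. The second, conceptually indispensable but routine, ingredient is the finiteness $\hbr(\varphi;L,L')<\infty$: it is precisely what turns $\hbr\le\alpha\,\hbr$ with $\alpha<1$ into $\hbr=0$, so the proof ultimately rests on Yomdin's theorem through Theorem \ref{thm:growth} as well. Finally, I would remark that when the gaps $k_{i+1}-k_i$ are bounded --- i.e.\ under $\gamma$-almost periodicity --- the same decomposition, now with $j$ confined to a finite set, gives at once the stronger Proposition \ref{prop:AP}, that the sequences $b_\eps(L,L^k)$ and $b_\eps(\varphi^k)$ are bounded in $k$.
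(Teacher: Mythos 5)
Your proof is correct and follows essentially the same line of argument as the paper: write $k=k_i+\ell$ with $\ell\leq\alpha k_i$ using \eqref{eq:gamma-gap}, invoke the $\gamma$-Lipschitz continuity of $b_\eps$ from \cite[Thm.\ B]{KS} to reduce $b_{\eps_0}(L,L^k)$ to $b_{\eps_1}(L,L^\ell)$ at a slightly smaller threshold, obtain a scaling inequality $\hbr_{\eps_0}\leq\alpha\,\hbr_{\eps_1}$, and finish using the finiteness $\hbr<\infty$ supplied by Theorem \ref{thm:growth}. The only cosmetic differences are that the paper fixes $\eps_1=\eps_0/2$, iterates $\hbr_\eps\leq\alpha\,\hbr_{\eps/2}$ and derives a contradiction with $\hbr<\infty$ by letting the iteration blow up, whereas you pass directly to the supremum over $\eps_0$ to get $\hbr\leq\alpha\,\hbr$; and the paper deduces the absolute case $\hbr(\varphi)=0$ from the relative one via the diagonal in $M\times M$, whereas you rerun the argument directly for $\varphi^k$. (A tiny slip: in your display defining $f$, the left-most quantity should read $\log^+b_{\eps_0}(L,L^k)$, since $f$ is a supremum of logarithms; this does not affect the argument.)
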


\begin{proof}[Proof of Propositions \ref{prop:AP} and \ref{prop:AP2}]
  The absolute case of the propositions concerning
  $b_\eps\big(\varphi^k\big)$ and $\hbar(\varphi)$ follows from the
  relative case by setting $L=L'$ to be the diagonal in $M\times M$
  and replacing $\varphi$ by $\id\times \varphi$. Hence we will focus
  on the relative case.

  The key to the proof is \cite[Thm.\ B]{KS} asserting, roughly
  speaking, that one can replace the Hofer norm in
  \eqref{eq:intersections} by the $\gamma$-norm. Namely, recall that
  for any two Lagrangian submanifolds $L'$ and $L''$ Hamiltonian
  isotopic to each other the $\gamma$-distance between $L'$ and $L''$
  is defined as
$$
\gamma(L',L''):=\inf\big\{\gamma(\psi)\mid \psi(L') = L''\big\}\leq
d_{\hn}(L',L'').
$$
Then, as a consequence of \cite[Thm.\ B]{KS}, we have the following
refinement of \eqref{eq:intersections}:
 \begin{equation}
   \label{eq:intersections2}
   b_{\eps+\delta}(L,L')\leq b_{\eps}(L,L'') \textrm{ when }
   \gamma(L',L'')<\delta/2.
\end{equation}

In the setting of Proposition \ref{prop:AP}, fix $\eps>0$. We claim
that there exist $N\in\N$ such that 
\begin{equation}
  \label{eq:bound}
  b_\eps\big(L,L^k\big)
  \leq \max_{0\leq \ell\leq N}b_{\eps/2}\big(L,L^\ell\big)
\end{equation}
for all sufficiently large $k \in \N$.  Indeed, since $\varphi$ is
$\gamma$-almost periodic, there exists a sequence of positive integers
\[
k_1<k_2<k_3<\ldots
\]
such that
$$
\gamma\big(\varphi^{k_i}\big)<\eps/4\textrm{ and } k_{i+1}-k_i\leq N
$$
for some $N$. Let $k \geq k_1$ and write $k=k_i+\ell$ with
$0\leq \ell \leq N$. Then
$$
L^k=\varphi^{k_i}\big(L^\ell\big) \textrm{ with }
\gamma\big(\varphi^{k_i}\big)<\eps/4.
$$
Hence, by \eqref{eq:intersections2},
$$
b_\eps\big(L,L^k\big)\leq b_{\eps/2}\big(L,L^\ell\big)
$$
for all $k \geq k_1$. This proves \eqref{eq:bound} and completes the
proof of Proposition \ref{prop:AP}.

Turning to the proof of Proposition \ref{prop:AP2}, for the sake of
brevity, set
$$
b_\eps(k):= b_\eps\big(L,L^k\big)\textrm{ and }
\hbar_\eps:=\hbar_\eps(\varphi; L,L').
$$
Let $k_i$ be a strictly increasing sequence of positive integers such
that
$$
\gamma\big(\varphi^{k_i}\big)<\eps/4 \textrm{ and } k_{i+1}-k_i\leq
\alpha k_i\textrm{ with } \alpha<1.
$$
As above, for $k \geq k_1$, write $k=k_i+\ell$ where now
$0\leq \ell \leq \alpha k_i$.  Then
$$
b_\eps(k)=b_\eps(k_i+\ell)\leq b_{\eps/2}(\ell).
$$
Furthermore, for any $\eta>\hbar_{\eps/2}$ and some constant $C$,
$$
\log^+ b_{\eps/2}(\ell)\leq\eta \ell+C\leq \eta\alpha k_i+ C\leq
\eta\alpha k+C.
$$
Combining these inequalities, we have
$$
\log^+ b_\eps(k)\leq \eta\alpha k +C.
$$
Dividing by $k$ and passing to the upper limit as $k\to\infty$, we see
that $\hbar_\eps\leq \eta\alpha$ for all $\eta>\hbar_{\eps/2}$, and
hence $\hbar_\eps\leq \alpha\hbar_{\eps/2}$. Equivalently,
$\hbar_{\eps/2}\geq \alpha^{-1}\hbar_\eps$ with
$0<\alpha<1$. Iterating this argument, we conclude that
$$
\hbar\geq \hbar_{2^{-i}\eps}\geq \alpha^{-i}\hbar_\eps\to \infty
\textrm{ as } i\to\infty
$$
unless $\hbar_\eps=0$. Since $\hbar<\infty$, we must have
$\hbar_\eps=0$, and hence $\hbar=0$.
\end{proof}

\begin{Remark}
  As we have pointed out in Section \ref{sec:AI+PR}, we do not know if
  we can replace barcode entropy by sequential barcode entropy in
  Propositions \ref{prop:AP} and~\ref{prop:AP2}.
\end{Remark}

\end{document}